\documentclass[12pt]{article}
\usepackage{amssymb, amscd, amsmath, latexsym, hyperref}
\newtheorem{thm}{Theorem}[section]
\newtheorem{cor}[thm]{Corollary}
\newtheorem{df}[thm]{Definition}

\newtheorem{prp}[thm]{Proposition}
\newtheorem{rmk}[thm]{Remark}
\newtheorem{exm}[thm]{Example}

\usepackage{xcolor}

\numberwithin{equation}{section}

\numberwithin{equation}{section}

\def\S+{\mathbf{SO(n, 1)^+}}
\def\O+{\mathbf{O(n, 1)}}

\def\s+{\mathcal{S}}

\def\Z{\mathbb{Z}}
\def\N{\mathbb{N}}
\def\C{\mathbb{C}}

\newcommand {\cat}{\mathbf}

\date{}

\begin{document}
\title{Commutator identities, Lie product identities, and Multiplicative Lie algebras}

\author{Vipul Kakkar$^1$, Ramji Lal$^2$, Ravindra Prasad Shukla$^2$, Swapnil Srivastava$^3$\\ $^1$Department of Mathematics, Central University of Rajasthan \\ $^2$ Department of Mathematics, University of Allahabad\\ $^3$ Department of Mathematics, Ewing Christian College\\ $^3$math.swapnil@gmail.com}

\maketitle

\begin{abstract}
The aim of  this paper is to give a formal meaning of universal commutator identities and to see as to how the structure of multiplicative Lie algebra on a group is a potential candidate to describe universal commutator identities. We introduce some higher Schur multipliers. We shall also introduce and study Steinberg and  Chevalley Multiplicative Lie algebras, the multiplicative Lie algebras  with commutator identities among the root vectors describing Lie product relations. Free multiplicative Lie algebra on Steinberg and Chevalley groups are discussed.
\end{abstract}

{\bf Keywords}: Lie algebras, Free multiplicative Lie algebras, Chevalley Groups, Schur multiplier.

{\bf MSC}: 19C09; 20N99; 20E99; 20F99.

\section{Introduction}
One of the most basic problem in group theory is to study the universal  commutator identities which hold in all groups and on free groups all the commutator identities are  derivable from universal commutator identities. For universal Lie product identities in $\Z$-Lie algebras, we have a theorem of Magnus (\cite{mks}) which asserts that for a free group $G\ =\ F(X)$ on a set $X$,   the natural Lie ring structure on $\oplus\sum_{n\in \N}\gamma_n(G)/\gamma_{n+1}(G)$ induced by commutator operation is free $\Z$-Lie ring $L\Z (X)$ on the abelianizer $G_{ab}$ of $G$ and which is free on the set $Y\ =\ \{xL_{2}(G)\mid x\in X\}$ in bijective correspondence with $X$ such that the $L\Z (X)$-module $(L\Z (X))_{n}$ generated by the set of homogeneous Lie elements of degree $n$ is $\gamma_n(G)/\gamma_{n+1}(G)$.  This, perhaps,  prompted Ellis (\cite{ell2}) to think of an structure, multiplicative Lie algebra structure $\star$  on a group which may describe universal $n$-commutator identities on a group more or less as Magnus theorem describes universal Lie product identities. From the universal property of free multiplicative Lie algebra on groups,  we have unique multiplicative Lie algebra homomorphism $\eta$ from the free multiplicative Lie algebra $LF(G)$ on $G$ to $G$ with improper multiplicative Lie algebra structure which induces surjective homomorphism $\eta_{n}$ from the subgroup $P_{n}(G)$ of $LF(G)$ generated by the set $\{(((....(g_{1}\star g_{2})\star g_{3})\star\cdots\star g_{n-1})\star g_{n})\mid g_{i}\in G\subset LF(G)\}$ to $\gamma_n(G)$ given by $\eta_{n}((((....(g_{1}\star g_{2})\star g_{3})\star\cdots\star g_{n-1})\star g_{n}))\ =\ [[[.....[g_{1}, g_{2}], g_{3}],\cdots ,g_{n-1}],g_{n}]$. It is conjectured that $\eta_{n}$ is an isomorphism for all $n$ whenever $G$ is a free group. The group structure on $LF(G)$ and  the homomorphism $\eta_{n}$ are important objects of study. The functor $Ker~\eta_{n}$ on the category $\cat{GP}$ of groups appears to be an interesting functor in the theory of groups.  We shall discuss the Schur theory including Schur-Hopf formula in the category of multiplicative Lie algebras. The purpose of this article is to make yet an other attempt, apart from those in (\cite{dl}, \cite{ell2}, \cite{lu}, \cite{mn}), to describe, of course a little differently, the notion of universal commutator identities in particular $n$-commutator identities ($n=2,3,\cdots$) which describe all $n$-commutator identities in the sense that on any group these commutator identities hold and on free groups, these commutator identities generate all commutator identities.

Together with its application in commutator calculus, the structure theory of multiplicative Lie algebra has become  of its own intrinsic interest.  The nilpotency and  solvability criteria for  multiplicative Lie algebras were discussed by F. Point and P. Wantiez (\cite{pw}) in 1996. The homology theory of multiplicative Lie algebras, Schur multiplier in terms of homology,  multiplicative Steinberg Lie algebra of a ring, and in turn, a new Milnor $K$-group $K'_{2}(R)\approx K_{2}(R)\times HC_{1}(R)$ were introduced and studied by A. Bak et al \cite{bdi} in 2007. In \cite{lu}, Schur-Hopf formula for multiplicative Lie algebras has been established,  the Schur multiplier has been described as group of non-trivial relations, and also a version of Schur multiplier in terms of second cohomology, all agreeing on finite multiplicative Lie algebras has been discussed.  The notion of Lie exterior square as an analogue of non-abelian exterior square of groups describing universal identities has also been introduced again in \cite{lu}. In this paper, we try to describe  universal $n$-commutator relations by introducing a $n$-ary structure $\star_{n}$ on a group with the help of presentation theory of multiplicative Lie algebras introduced in \cite{lu}.  In \cite{bdi}, 2-dimensional homological interpretation of the length 2, 3 commutator identities  using non-abelian homology is described, where as in \cite{lu}, the second cohomological interpretation is given.  We also introduce the concept of Chevalley multiplicative Lie algebra over a unital ring and study them in light of the earlier discussions and developments. We describe the relationship between the Scur multiplier of the multiplicative Lie algebra associated to a Chevalley group with Schur multiplier of the Chevalley group.
		
\section{Universal Commutator Identities, Preliminaries in  Multiplicative Lie algebra}

In this section, we introduce formally the notion of a minimal set of universal commutator identities, recall some basics in multiplicative Lie algebras from \cite{ell2}, \cite{lu} and establish some more basic facts in the structure theory for their subsequent use.   Thus, for a  given $n$, we look for a minimal set $X_{n}$ and a minimal finite subset $K_{n}$ of the free group  $F(t_{n}(F(X_{n})))$ on $t_{n}(F(X_{n}))$, where $t_{n}$ is the simple bracket arrangement of weight $n$, and $F(X_{n})$ is the free group on $X_{n}$ such that given any group $G$, the kernel $C_{n}(G)$ of the homomorphism $\eta_{n}$ from $F(t_{n}(G))$ to $\gamma_n(G)$ given by $\eta_{n}(((....((g_{1}, g_{2}), g_{3}),........),g_{n}))\ =\ [[[....[[{g_{1}, g_{2}}], g_{3}],........],g_{n}]$ contains the normal subgroup $B_{n}(G)$ of $F(t_{n}(G))$ generated by the set $K_{n}(G)$ of words obtained in $F(t_{n}(G))$ by replacing the symbols of the words  in $K_{n}$ by arbitrary members of $G$. Further,  in addition we require that $C_{n}(G)\ =\ B_{n}(G)$ whenever $G$ is a free group. The group $P_{n}(G)\ =\ F(t_{n}(G))/B_{n}(G)$ ($P_{2}(G)\ =\ G\wedge G$) may replace the role  of $G\wedge G$ for $n$-commutator relations. The group $C_{n}(G)/B_{n}(G)$ is the group of $n$-commutator relations of $G$ modulo the group $B_{n}(G)$ of universal $n$-commutator relations of $G$ ($n$-Schur multiplier $M_{n}(G)$ of $G$).

For example, consider the case $n=2$. In this case,

\[X_{2} = \{x, y, z\}\]

and

\begin{align*}
K_{2} & = \{(x, x), (x, y)(y, x), (xy, z)(x, z)^{-1}(^{x}y, ^{x}z)^{-1}\} \\
      & \subseteq F(t_{2}(F(X_{2})))\ =\ F(F(X_{2})\times F(X_{2})).
\end{align*}

Evidently, $B_{2}(G)\ =\ <K_{2}(G)>$ is the group of universal $2$-commutator relations in $G$, $C_{2}(G)$ the group of all $2$-commutator relations in $G$, $P_{2}(G)\ =\ F(t_{2}(G))/B_{2}(G)$ is the non-abelian exterior square $G\wedge G$ of $G$ and $C_{2}(G)/B_{2}(G)\ =\ H(G)\approx\ H_{2}(G, \Z )$ the Schur multipiler of $G$ (\cite{lu}). For free group $G$, $C_{2}(G)\ =\ B_{2}(G)$, since the Schur multiplier of a free group is trivial, and in turn, we have natural isomorphism from $P_{2}(G)$ to $\gamma_{2}(G)\ =\ [G , G]$.

Next, consider the case $n=3$. Observe that $[G, G]\supseteq \gamma_{3}(G)\ =\ [[G, G], G]$ and we have a surjective homomorphism $\eta_{2}$ from $P_{2}(G)\ =\ G\wedge G$ to $\gamma_{2}(G)$ which is isomorphism whenever $G$ is free. Also $G$ acts on $G\wedge G$ through the action $g\mapsto ^{g}$, where $^{g}$ is the automorphism of $G\wedge G$ given by $^{g}(h\wedge k)\ =\ ^{g}h\wedge ^{g}k$. Further, any $G$-equivariant homomorphism (in particular $\eta_{2}$) $\phi$ from $P_{2}(G)\ =\ G\wedge G$ to $G$ defines a semi-multiplicative Lie algebra structure $\star$ on $G$ in the sense of Definition \ref{s3d1} if we put $g\star h\ =\ \phi (g\wedge h)$.  Consider the subgroup $S_{3}(G)$ of $G\wedge G$ generated by $\{[g,h]\wedge k\mid g, h, k\in G\}$. Since $\gamma_{3}(G)$ is generated by simple $3$-commutators,  $\eta_{2}^{-1}(\gamma_{3}(G))\ =\ S_{3}(G)H_{2}(G)$, and $\eta_{2}|_{S_{3}(G)H_{2}(G)}$ is a surjective homomorphism to $\gamma_{3}(G)$. In turn, it induces surjective homomorphism from  $S_{3}(G)$ to $\gamma_{3}(G)$ whose kernel is $S_{3}(G)\bigcap H_{2}(G)$, and $S_{3}(G)/ S_{3}(G)\bigcap H_{2}(G)\approx \gamma_{3}(G)$. Further,  we have a surjective homomorphism $\phi_{3}$ from $F(t_{3}(G))$ to $ S_{3}(G)$ given by $\phi_{3} ((g, h), k)\ =\ [g, h]\wedge k$ which in turn gives a surjective homomorphism $\eta_{3}\ =\ \eta_{2}\circ \phi_{3}$ from $F(t_{3}(G))$ to $\gamma_{3}(G)$. We have an other subgroup $\hat{S}_{3}(G)$ of $G\wedge G$ generated by 

\[\{(g\wedge h)(k\wedge h)(h\wedge kg)\mid g, h, k\in G\},\]
 
and a homomorphism $\psi_{3}$ from $F(t_{3}(G))$ to $\hat{S}_{3}(G)$ given by 

\[\psi_{3} ((g, h), k)\ =\ (g\wedge h)(k\wedge h)(h\wedge kg).\]

Since $[[g, h], k]\ =\ [g, h][k, h][h, kg]$, 

\[\eta_{2} \circ \psi_{3}\ =\ \eta_{2} \circ\phi_{3}\ =\ \eta_{3}.\] 

Now,

\begin{align*}
 \phi_{3}^{-1}(K_{2}(G)) & = \{((g, h), [g, h]), ((g,h), [k,l])((k,l), [g, h]),\\
                         & ((gh, k), l)^{-1} ((^{[^{g}h, ^{g}k]}g, ^{[^{g}h, ^{g}k]}k ), ^{[^{g}h, ^{g}k]}l)((^{g}h, ^{g}k),l)\mid g, h, k, l\in G\}
\end{align*}

is a set of $3$-commutator identities in $G$ which are derivable from the earlier universal $2$-commutator identities.
Further, a variant 

\begin{center}
$[[g, h], ^{h}k] [[h, k], ^{k}g][[k, g], ^{g}h]\ =\ 1 $
\end{center}

of Jacobi  identity for groups hold in $\gamma_{3}(G)$ and as such $\{ ([g, h]\wedge  ^{h}k) ([h, k]\wedge ^{k}g)([k, g]\wedge ^{g}h)\mid g, h, k\in G\}$ is contained in $Ker ~\eta_{2} |_{S_{3}(G)}\ =\ S_{3}(G)\bigcap H_{2}(G)$. In turn, it follows that $\{ ((g, h), ^{h}k) ((h, k), ^{k}g)((k, g), ^{g}h)\mid g, h, k\in G\}$ is contained in $Ker~ \eta_{3}\ =\ C_{3}(G)$. However, it does not belong to $\phi_{3}^{-1}(K_{2}(G))$ even if $G$ is a free group. This prompts us to enlarge $X_{2}$ to $X_{3}\ =\ \{x, y, z, u\}$ and $K_{2}$ to 

\begin{align*}
K_{3} & =  \{((x, y), [x, y]), ((x,y), [z,u])((z,u), [x, y]), \\
      & ((xy, z), u)^{-1} ((^{[^{x}y, ^{x}z]}x, ^{[^{x}y, ^{x}z]}z ), ^{[^{x}y, ^{x}z]}u)((^{x}y, ^{x}z),u)\mid x, y, z, u\in X_{3}\} \\
			& \bigcup \{((x, y), ^{y}z) ((y, z), ^{z}x)((z, x), ^{x}y)\mid x, y, z\in X_{3}\}.
\end{align*}

Here again, we can see as consequence to \cite{ell2} that  $K_{3}$ is the set of universal $3$-commutator relations in the sense that $B_{3}(G)\ =\ <K_{3}(G)>\subseteq C_{3}(G)$ and equality holds whenever $G$ is free group. This really means that the $3$-commutator  identities

\begin{enumerate}
	\item[(i)] $[[x, x], y]=1$,
	\item[(ii)] $[[xy, z], u]\ =\ [[^{[^{x}y, ^{x}z]}x, ^{[^{x}y, ^{x}z]}z ], ^{[^{x}y, ^{x}z]}u][[^{x}y, ^{x}z],u]$,
	\item[(iii)] $[[x, yz], u]\ =\ [[^{[x, y]}(^{y}x), ^{[x, y]}(^{y}z)], ^{[x, y]}u][[x, y], u]$,
	\item[(iv)] $[[x, y], ^{y}z] [[y, z], ^{z}x][[z, x], ^{x}y] $, together with the identity
	\item[(v)] $[^{z}[x, y], u]\ =\ [[^{z}x, ^{z}y], u]$
\end{enumerate}

form basic universal $3$-commutator identities.

More generally, look at the subgroup  

\[S_{n}(G) = < \{([....[[g_{1}, g_{2}],g_{3}]\cdots ,g_{n-1}]\wedge  g_{n})\mid g_{i}\in G\}>\] 

of $G\wedge G$, $n\geq 3$. Clearly, $\eta_{2}^{-1}(\gamma_n(G))\ =\ S_{n}(G)H_{2}(G)$, $Ker~\eta_{2}|_{S_{n}(G)}\ =\ S_{n}(G)\bigcap H_{2}(G)$. We have a surjective homomorphism $\phi_{n}$ from $F(t_{n}(G))$ to $S_{n}(G)$ given by \\$\phi_{n}((....((g_{1}, g_{2}),g_{3})\cdots ,g_{n-1}),  g_{n})\ =\ ([\cdots [[g_{1}, g_{2}],g_{3}]\cdots ,g_{n-1}]\wedge  g_{n})$. Clearly, $< \phi_{n}^{-1}(K_{3}(G)) >\subseteq C_{n}(G)$, and it is conjectured  that the equality holds for all $n\geq 3$, whenever $G$ is free group (\cite{ell1}). Thus, we can take $X_{n}$ to be $\{x_{1}, x_{2}, \cdots, x_{n+1}\}$ and $K_{n}$ can be obtained as before. 

The conjecture really asserts that all basic universal $n$-commutator identities ($n\geq 3$) are derivable from the 5-fundamental commutator identities. This prompts us to concentrate on $T_{3}(G)\ =\ S_{3}(G)H_{3}(G)/H_{3}(G)$ where $H_{3}(G)\ =\ <K_{3}(G)>$ or more conveniently, $(G\wedge G)/H_{3}(G)$. Any $G$-equivariant map $\hat{\eta}$ from  $(G\wedge G)/H_{3}(G)$ to $G$ defines a multiplicative Lie algebra structure $\star$ on $G$ by putting $g\star h\ =\ \hat{\eta} ((g\wedge h) H_{3}(G))$ (see Definition \ref{s2d1}). The structure of multiplicative Lie algebra is more convenient object for the study of commutator relations on groups, and it will formalize discussions. 

Next, we recall some basics in multiplicative Lie algebras from \cite{ell2}, \cite{lu} and establishes some more basic facts on the structure theory for their subsequent use. As usual the notation $[x, y]$ stands for the commutator $xyx^{-1}y^{-1}$, and $^{y}x$ for the conjugate $yxy^{-1}$ of $x$. $1$ will denote the identity of the group. 

\begin{df}\label{s2d1} (\cite{ell2})
A group $(G, \cdot )$ together with a function $\{~,~\}:G\times G \rightarrow G$ is called a multiplicative Lie algebra if for all $x, y, z\in G$,  the following hold
\begin{enumerate}
	\item[(1)] $\{x,x\}=1$,
	\item[(2)] $\{x,y\cdot z\}=\{x,y\} \cdot ^y\{x,z\}$,
	\item[(3)] $\{x\cdot y,z\}=^x\{y,z\} \cdot \{x,z\}$,
	\item[(4)] $\{\{x,y\},^yz\}\cdot\{\{y,z\},^zx\}\cdot\{\{z,x\},^y\}=1$,
	\item[(5)] $^z\{x,y\}=\{^zx, ^zy\}$.
\end{enumerate}
\end{df}

For our convenience, we will denote $\{x,y\}$ by $x\star y$. Therefore the above conditions are represented as

\begin{enumerate}
	 \item[(1)] $x\star x\ =\ 1$, 

   \item[(2)] $x\star (y\cdot z)\ =\ (x\star y)\cdot ^{y}(x\star z )$, 

   \item[(3)] $(x\cdot y)\star z\ =\ ^{x}(y\star z)\cdot (x\star z )$, 

   \item[(4)] $((x\star y)\star ^{y}z )\cdot ((y\star z)\star ^{z}x )\cdot ((z\star x)\star ^{x}y )\ =\ 1$, 
     
		\item[(5)] $ ^{z}(x\star y)\ =\ (^{z}x\star ^{z}y)$. 
\end{enumerate}

The function $\{~,~\}$ or $\star$ is called the multiplicative lie product. By the abuse of language we shall say that $\star$ is a multiplicative Lie algebra structure on the group $G$.  The group operation is either juxtaposition or it will be denoted by $\cdot$.  We shall  say that $G$ is a multiplicative Lie algebra (the group operation and the Lie multiplication will be understood to be there). Ellis \cite{ell2} (see also \cite{lu}) established the following proposition. 

\begin{prp}\label{s2p1}(\cite{ell2})  Let $(G, \cdot , \star )$ be a multiplicative Lie algebra. Then following further identities hold:

(i) $1\star x\ =\ 1\ =\ x\star 1$,

(ii) $(x\star y)\cdot (y\star x)\ =\ 1$,

(iii) $^{x\star y}(u\star v)\ =\ ^{[x, y]}(u\star v)$,

(iv) $[(x\star y), z]\ =\ ([x, y]\star z)$,

(v) $x^{-1}\star y\ =\ ^{x^{-1}}((x\star y)^{-1})$ and also $x\star y^{-1}\ =\ ^{y^{-1}}((x\star y)^{-1})$.
\end{prp}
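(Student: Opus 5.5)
Everything follows from axioms (1), (2), (3) and (5) of Definition \ref{s2d1}; the Jacobi-type axiom (4) is not needed. I will prove the five parts in the order (i), (ii), (v), (iv), (iii), since (iii) is an immediate consequence of (iv).

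For (i), put $y=z=1$ in (2): $x\star 1=(x\star 1)\cdot{}^{1}(x\star 1)=(x\star 1)^2$, so $x\star 1=1$. Symmetrically, putting $x=y=1$ in (3) gives $1\star z=1$.

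For (ii), expand $(x y)\star(x y)=1$ in two steps. First (3) gives ${}^{x}(y\star xy)\cdot(x\star xy)$. Then (2), together with (1), gives $y\star xy=(y\star x)\cdot{}^{x}(y\star y)=y\star x$ and $x\star xy=(x\star x)\cdot{}^{x}(x\star y)={}^{x}(x\star y)$. Hence $1={}^{x}(y\star x)\cdot{}^{x}(x\star y)={}^{x}\big((y\star x)(x\star y)\big)$. Conjugating back by $x^{-1}$ gives $(y\star x)(x\star y)=1$, which is equivalent to (ii).

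For (v), use $1=1\star y=(x x^{-1})\star y={}^{x}(x^{-1}\star y)\cdot(x\star y)$, obtained from (i) and (3). Solving gives $x^{-1}\star y={}^{x^{-1}}((x\star y)^{-1})$. The second formula comes the same way from $1=x\star(y y^{-1})=(x\star y)\cdot{}^{y}(x\star y^{-1})$, using (2).

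For (iv), the main step is to compute $(x\star y)^{z}$-type expressions using (5). Expand $[x,y]\star z=(xyx^{-1}y^{-1})\star z$ repeatedly with (3), and rewrite each $x^{-1}\star z$ and $y^{-1}\star z$ using (v). The resulting product should telescope to $(x\star y)\cdot{}^{z}(x\star y)^{-1}$ after applying (5), i.e. ${}^{z}(x\star y)={}^{z}x\star{}^{z}y$.

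I expect this step to be the main obstacle. If the telescoping fails, I would instead start from $x\star(yz)$. Computing it via (2), and also via $yz={}^{y}z\cdot y$, and then comparing the two expressions should produce the conjugation relation ${}^{z}(x\star y)=\ldots$ that yields $[x\star y,z]=[x,y]\star z$. This is Ellis's standard argument. The sign and orientation conventions, $[a,b]=aba^{-1}b^{-1}$ and ${}^{a}b=aba^{-1}$, need careful tracking throughout.

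Finally, (iii) follows from (iv) and (5). By (5), ${}^{x\star y}(u\star v)=({}^{x\star y}u)\star({}^{x\star y}v)$. By (iv), ${}^{x\star y}w=[x\star y,w]\,w=([x,y]\star w)\,w$. Conversely, ${}^{[x,y]}w=[[x,y],w]\,w$. So it suffices to compare $([x,y]\star w)\,w$ with $[[x,y],w]\,w$ inside a product of the form $a\star b$. The cleaner route is to apply (iv) with $z=u\star v$: this gives $[x\star y,u\star v]=[x,y]\star(u\star v)$. By (ii), the right-hand side equals $((u\star v)\star[x,y])^{-1}$, and by (iv) again this is $[u\star v,[u,v]]$-type data. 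Hence the conjugation by $x\star y$ and by $[x,y]$ agree on $u\star v$, because ${}^{g}a=[g,a]\,a$ and the two commutators coincide.
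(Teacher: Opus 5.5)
Your proofs of (i), (ii) and (v) are correct. The plan for (iv) and (iii), however, has a genuine gap: it is circular.

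First, the primary route for (iv) cannot work as stated. Expanding $[x,y]\star z$ by (3) and rewriting with (v) gives
\[
[x,y]\star z={}^{[x,y]}(y\star z)^{-1}\cdot{}^{xyx^{-1}}(x\star z)^{-1}\cdot{}^{x}(y\star z)\cdot(x\star z).
\]
This involves only $x\star z$ and $y\star z$. No rewriting with (5) makes $x\star y$ appear, so nothing telescopes to $(x\star y)\,{}^{z}(x\star y)^{-1}$.

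Your fallback supplies the right extra relation. Comparing the two expansions of $x\star(zy)$ gives $(x\star z)\,{}^{z}(x\star y)=(x\star y)\,{}^{y}(x\star{}^{y^{-1}}z)$. But inserting it into $[x,y]\star z=(x\cdot{}^{y}x^{-1})\star z$ only yields
\[
[x,y]\star z={}^{[x,y]}\bigl({}^{z}(x\star y)^{-1}(x\star z)^{-1}(x\star y)\bigr)\cdot(x\star z).
\]
To conclude, you must replace conjugation by $[x,y]$ with conjugation by $x\star y$ on a product of Lie products, and that is exactly (iii). Since you planned to derive (iii) from (iv), the argument is circular.

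Your derivation of (iii) from (iv) is also not valid on its own terms. (iv) gives $[x\star y,u\star v]=[x,y]\star(u\star v)$, and you then need this to equal $[[x,y],u\star v]$. Rewriting it by (ii) as $((u\star v)\star[x,y])^{-1}$ leads nowhere, because that is not of the form $[a,b]\star c$ to which (iv) applies. The phrase ``the two commutators coincide'' is precisely the statement to be proved.

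The missing idea is to prove (iii) first, directly from (2), (3) and (5). Expand $(xy)\star(zw)$ by (3) then (2), and by (2) then (3):
\begin{align*}
(xy)\star(zw)&={}^{x}(y\star z)\cdot{}^{xz}(y\star w)\cdot(x\star z)\cdot{}^{z}(x\star w)\\
&={}^{x}(y\star z)\cdot(x\star z)\cdot{}^{zx}(y\star w)\cdot{}^{z}(x\star w).
\end{align*}
Hence $(x\star z)\,{}^{zx}(y\star w)\,(x\star z)^{-1}={}^{xz}(y\star w)$. Take $y={}^{(zx)^{-1}}u$ and $w={}^{(zx)^{-1}}v$, and use (5). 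The left side becomes ${}^{x\star z}(u\star v)$ and the right side becomes ${}^{xz(zx)^{-1}}(u\star v)={}^{[x,z]}(u\star v)$, which is (iii).

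With (iii) in hand, your fallback goes through. By (5), every factor inside the bracket in the second display is a Lie product or the inverse of one. So (iii) turns ${}^{[x,y]}(\cdots)$ into $(x\star y)(\cdots)(x\star y)^{-1}$, and the expression collapses to $(x\star y)\,{}^{z}(x\star y)^{-1}=[x\star y,z]$, which is (iv).

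The paper gives no proof of its own; it cites Ellis. It only remarks that axiom (4) is not needed and that everything except (ii) holds without axiom (1), and this route confirms both.
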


\begin{prp}\label{prp}
 In case, $G$ is free or a simple group then 
\[(x\star y)(x\star z)( [z, x]\star y) = (x\star yz) = ((yz\star x))^{-1},\]
for all $x,y,z$ in $G$. More generally, if $H_{2}(G) = \{0\}$, then also the above identity holds for all $x,y,z $. 
\end{prp}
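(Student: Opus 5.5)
The plan is to derive the identity directly from the axioms in Definition \ref{s2d1} and the identities of Proposition \ref{s2p1}. If this works, it also shows that the hypotheses ($G$ free, simple, or $H_{2}(G)=\{0\}$) are not actually needed. I will handle the two equalities separately.

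\textbf{Second equality.} The equality $(x\star yz) = ((yz\star x))^{-1}$ is immediate from Proposition \ref{s2p1}(ii) with $y$ replaced by $yz$.

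\textbf{First equality.} I start from axiom (2):
\[x\star (yz) = (x\star y)\cdot {}^{y}(x\star z).\]
It then suffices to prove
\[{}^{y}(x\star z) = (x\star z)\,([z,x]\star y).\]
For this I use the group identity ${}^{y}a = a\,[a^{-1},y]$, valid for all $a,y$ with the paper's convention $[u,v]=uvu^{-1}v^{-1}$. One checks it directly: $a\cdot a^{-1}yay^{-1} = yay^{-1}$. Apply it with $a = x\star z$. By Proposition \ref{s2p1}(ii), $a^{-1} = (x\star z)^{-1} = z\star x$, so
\[{}^{y}(x\star z) = (x\star z)\,[(z\star x),y].\]
Proposition \ref{s2p1}(iv), with $(x,y,z)$ replaced by $(z,x,y)$, gives $[(z\star x),y] = [z,x]\star y$. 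Substituting this in yields the displayed identity, and hence the first equality.

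\textbf{Main check and fallback.} The only real obstacle is bookkeeping with the commutator and conjugation conventions. I need to confirm that ${}^{y}a=a[a^{-1},y]$ really holds with $[u,v]=uvu^{-1}v^{-1}$ and ${}^{y}a=yay^{-1}$, and that Proposition \ref{s2p1}(iv) is applied with the correct ordering of arguments. If a convention mismatch appeared, I would use the route suggested by the stated hypothesis $H_{2}(G)=\{0\}$. Axioms (1)--(3) and (5) make $x\wedge y\mapsto x\star y$ a well-defined $G$-equivariant homomorphism $G\wedge G\to G$. When $H_{2}(G)=\{0\}$, the map $G\wedge G\to[G,G]$ is an isomorphism, so $x\star y=\psi([x,y])$ for a homomorphism $\psi:[G,G]\to G$. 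The claim would then follow by applying $\psi$ to the group identity
\[[x,yz]=[x,y]\,[x,z]\,[[z,x],y],\]
which is the same computation as above carried out inside $G$.
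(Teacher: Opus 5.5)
Your proof is correct, and your main route differs genuinely from the paper's.

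\textbf{The paper's route.} The paper does no computation with the axioms. It appeals to Magnus--Karrass--Solitar for an isomorphism between the subgroup generated by Lie brackets and the corresponding commutator subgroup, phrased as $T_{3}(G)\to\gamma_{3}(G)$. This is exactly where the hypothesis ($G$ free, simple, or $H_{2}(G)=\{0\}$) is used. It then transports the group identity $[x,y][x,z][[z,x],y]=[x,yz]=[yz,x]^{-1}$. Your fallback is essentially this strategy, using instead the standard fact that the kernel of $G\wedge G\to[G,G]$ is $H_{2}(G)$.

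\textbf{Your route.} You get the identity in two lines from axiom (2), Proposition \ref{s2p1}(ii),(iv), and ${}^{y}a=a[a^{-1},y]$. The convention check you were worried about goes through: $a\cdot a^{-1}yay^{-1}=yay^{-1}$, and (iv) with $(x,y,z)\mapsto(z,x,y)$ gives $[z\star x,y]=[z,x]\star y$.

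\textbf{What this buys.}
\begin{itemize}
\item The identity holds in every multiplicative Lie algebra. The Jacobi axiom (4) is never used, so by the paper's own remark that Proposition \ref{s2p1} survives without it, the identity holds in every semi-multiplicative Lie algebra as well.
\item The hypotheses of the proposition are therefore superfluous.
\item You avoid the loosely justified appeal to Magnus--Karrass--Solitar. For a simple group with nonzero Schur multiplier, such as $A_{5}$, that argument could not go through $H_{2}(G)=\{0\}$ anyway.
\item Example (iii) asserts that $G\rtimes Inn(G)$ is a multiplicative Lie algebra failing this identity. Your argument shows that claim cannot be correct.
\end{itemize}
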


\begin{proof}
From \cite{mks}, it follows that under the hypothesis, the map $\phi_{3}$ from the subgroup  $T_{3}(G)$ of $G$ generated by $\{((x\star y)\star z\mid x, y, z\in G\}$ to $\gamma_{3}(G)$ given by $\phi_{3}((x\star y)\star z)\ =\ [[x, y],z]$ is an isomorphism and the fact that  $[x, y][x, z][ [z, x], y]\ =\ [x, yz]\ =\ [yz, x]^{-1}$. 
\end{proof}

\begin{rmk}
\begin{enumerate}
	\item It may be observed that Proposition \ref{s2p1} holds in any structure $(G, \cdot , \star )$ satisfying the condition 1, 2, 3, and 5 of Definition \ref{s2d1}.  The identity (4) of the definition is not needed to establish it.
\item We can replace the identity (ii) of Proposition \ref{s2p1} with either of the defining condition (2) or (3) of Definition \ref{s2d1}. 

\end{enumerate}
\end{rmk}

\begin{exm}
\begin{enumerate}
	\item[(i)] On any group $G$, we have the \textit{trivial multiplicative Lie algebra} structure   $\star$ given by $x\star y\ =\ 1$ for all $x, y\in G$. The commutator operation $[~, ~]$ also defines a multiplicative Lie algebra structure on $G$ which will be termed as the improper multiplicative Lie algebra structure on $G$. Other multiplicative Lie algebra structures are called proper multiplicative Lie algebra structures on $G$. There are groups, for example free groups and simple groups,  on which there are no proper multiplicative Lie algebra structures. They will be termed as Lie simple groups. 

\item[(ii)] Lie rings and  Lie algebras  are multiplicative Lie algebras. Indeed a multiplicative Lie algebra $G$ is $\Z$-Lie algebra if and only if the group part of $G$ is abelian.

\item[(iii)]  For any non-abelian group $G$, the standard semidirect product $G\rtimes Inn(G)$ is a proper multiplicative Lie algebra with the Lie product $\star$ given by $(g, i_{h})\star (k, i_{l})\ =\ ([gh, kl], I_{G})$, where $I_{G}$ denotes the identity element of $Inn(G)$. Note that this multiplicative Lie algebra does not satisfy the identity in Proposition \ref{prp}.

\item[(iv)] Consider the Hisenberg  group $H_{3}(\C )$ of unipotent $3\times 3$ matrices with entries in the field $\C$ of complex numbers. We have natural isomorphism $\psi$ from the commutator subgroup $[H_{3}(\C ), H_{3}(\C )]$ to the additive group $\C$ of complex numbers. Let $\phi$ be a non-identity automorphism of $\C$ (note that there are uncountably many automorphisms of $\C$ and all of them are non continuous except the identity and conjugation). Then $\eta\ =\ \psi^{-1}o\phi o\psi$ is an automorphism of $[H_{3}(\C ), H_{3}(\C )]$. Define an operation $\star$ on $H_{3} (\C )$ by putting $A\star B\ =\ \eta ([A, B]) $. It can be observed that $(H_{3}(\C ), \cdot , \star )$ is a proper multiplicative Lie algebra.   More generally, if $G$ is a nilpotent group of class 2, and $\phi$  a non-identity non-trivial endomorphism of $[G, G]$. Then we have a proper  multiplicative Lie algebra structure $\star$ on $G$ given by $a\star b\ =\ \phi [a, b]$, $a, b\in G$.
\end{enumerate}
\end{exm}

\begin{rmk}
Let $(G, \cdot, \star )$ be a multiplicative Lie algebra with a topology $T$ on $G$ such that $(G, T)$ is a topological group. Then $\star $ need not be continuous unlike in case of Lie Algebras, where Lie product is continuous (see the above example (iv)). This prompts us to have a genuine concept of topological multiplicative Lie algebra. There are several groups $G$ (for example simple groups, free groups and many more) with the property that given any multiplicative Lie algebra structure $\star$ and a topological group structure $T$ on $G$, $(G, \star , T)$ is a topological multiplicative Lie algebra. However, not all groups have this property. For example on the Heisenberg topological group $H_{3}(\C )$ of $3\times 3$ uni-upper triangular matrices with complex entries have uncountably many multiplicative Lie algebra structures which are not topological multiplicative Lie algebra. It may be an interesting problem to describe groups on which any topological group structure and multiplicative Lie algebra structure make them topological multiplicative Lie algebra. 
\end{rmk}


The basic notions and results in multiplicative Lie algebras such as subalgebras, ideals, quotient, homomorphisms, isomorphism theorems are as usual. For example, an ideal of $G$ is a normal subgroup $H$ such that $g\star h\in H$ for all $g\in G$ and $h\in H$. It can be easily checked that if $H$ and $K$ are ideals, then the group product $HK$ is an ideal, and the subgroup $H\star K$ of $G$ generated by $\{h\star k\mid h\in H,\ k\in K\}$ is also an ideal. In particular, $G\star G$ is an ideal. 
  
 It can also be seen that the group center $Z(G)$ of $G$, the Lie center $LZ(G) = \{x\in G\mid x\star y = 1 \text{ for all } y\in G\}$, the group commutator $[G, G]$, and Lie commutator $G\star G$ are ideals of the Lie algebra $G$. The following Proposition follows by induction if we use Definition \ref{s2d1}(2) and Proposition \ref{s2p1}(iv).

\begin{prp}\label{s2p2}(\cite{lu}) (i) Let $(G, \star )$ be a multiplicative Lie algebra which is perfect as multiplicative Lie algebra in the sense $G\star G\ =\ G$. Then the Lie center $LZ(G)$ of $G$ is the same as the group center $Z(G)$ of $G$ as a group, and the Lie commutator $G\star G$ is contained in the group commutator $[G, G]$.

(ii) Let $(G, \star )$ be a multiplicative Lie algebra which is perfect as a group  in the sense that $[G, G]\ =\ G$. Then the group center $Z(G)$ is contained in the Lie center $LZ(G)$, and the group commutator $[G, G]$ is contained the Lie commutator $G\star G$.
\end{prp}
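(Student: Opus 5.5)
We argue by induction, as the paper indicates, using Definition \ref{s2d1}(2) together with Proposition \ref{s2p1}(iv) and (ii). The basic link between the two products is Proposition \ref{s2p1}(iv): $[x\star y, z] = [x,y]\star z$. So any group commutator of a Lie product with an arbitrary element is again a Lie product. In the other direction, (2) and (3) of Definition \ref{s2d1} expand Lie products of products as products of conjugates of Lie products. Both parts come from combining these two facts.

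For (i), assume $G\star G = G$.
\begin{itemize}
\item \emph{$G\star G\subseteq [G,G]$.} Every $g\in G$ is a product of elements $x\star y$ and their inverses. By Proposition \ref{s2p1}(ii), $(x\star y)^{-1}=y\star x$, so $g$ is a product of Lie products. For a generator $x\star y$, write $x$ as a product $\prod (a_i\star b_i)$ and expand $x\star y$ with Definition \ref{s2d1}(3), by induction on the length of that product. Each factor becomes a conjugate of $(a_i\star b_i)\star y$. By Proposition \ref{s2p1}(iv) this equals $[a_i,b_i]\star y$, and by the reverse reading of (iv), applied with $a_i\star b_i$ in the first slot, it equals $[a_i\star b_i, y]$. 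The cleanest form is: for $u\in G\star G$, $u\star y$ lies in $[G,G]$. Since $[G,G]$ is normal, conjugates stay inside it, so $x\star y\in[G,G]$.
\item \emph{$LZ(G)=Z(G)$.} Let $z\in Z(G)$ and $y\in G$. Write $z=\prod (a_i\star b_i)$ and expand $z\star y$ with Definition \ref{s2d1}(3). The $i$-th factor is a conjugate of $(a_i\star b_i)\star y = [a_i\star b_i,y]$. This is not immediately $1$ termwise, so the induction has to be done on the whole product. Writing $z=uv$ with $u$ a Lie product, (3) gives $z\star y={}^u(v\star y)(u\star y)$. The full product $z$ commutes with $y$, so one would rather use Proposition \ref{s2p1}(iv) in the form $[w,y]$ for general $w\in G\star G$. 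Extending (iv) by induction to $[w,y] = $ (Lie product expression) shows that, for $w\in G\star G$, the map $w\star y$ agrees with $[w,y]$; the induction uses (3) and the commutator identity $[ab,y]={}^a[b,y][a,y]$, which have identical shape. Since $G\star G=G$, this gives $x\star y=[x,y]$ for all $x,y$, which yields both inclusions $LZ(G)=Z(G)$ at once.
\end{itemize}

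For (ii), assume $[G,G]=G$. Every element is a product of commutators $[a,b]$. Proposition \ref{s2p1}(iv) gives $[a,b]\star z = [a\star b, z]$. The same inductive extension, now using (3) and the group identity $[ab,z]={}^a[b,z][a,z]$, shows that for $w\in[G,G]$ there is $w'\in G\star G$ with $w\star z = [w', z]$. The correspondence is induced by $[a,b]\mapsto a\star b$, which is only defined on words. So the cleaner statement to prove by induction on word length is: if $w=\prod [a_i,b_i]$ and $w'=\prod(a_i\star b_i)$, then $w\star z=[w',z]$. This then gives the two claims.
\begin{itemize}
\item \emph{$Z(G)\subseteq LZ(G)$.} If $w\in Z(G)$, we need $[w',z]=1$. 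Here $w'$ need not be central, and this is the main obstacle. I would try Proposition \ref{s2p1}(iii), which gives ${}^{a\star b}u = {}^{[a,b]}u$ on Lie products. Since $[G,G]=G$ and part of this argument shows $G\star G\supseteq[G,G]$, every $u$ is a Lie product. Then conjugation by $w'$ equals conjugation by $w$ on all of $G$, so $[w',z]=[w,z]=1$.
\item \emph{$[G,G]\subseteq G\star G$.} Take $w=[a,b]$. The identity gives $[a,b]\star z=[a\star b,z]$. To show $[x,y]\in G\star G$, write $y$ as a product of commutators and use $[x,[a,b]] = $ (via (ii), (v) and (iv)) $[[a,b],x]^{-1}=([a\star b,x])^{-1}$. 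By (iv), $[a\star b,x]=[a,b]\star x\in G\star G$. Then $[x,y]$ for general $y=\prod[a_i,b_i]$ decomposes via the commutator product rule into conjugates of such terms, and $G\star G$ is normal.
\end{itemize}

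The main obstacle is the step $Z(G)\subseteq LZ(G)$, namely passing from $w$ to $w'$ via Proposition \ref{s2p1}(iii).
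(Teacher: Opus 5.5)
Your argument breaks at its central step. You read Proposition \ref{s2p1}(iv) as $(a\star b)\star y=[a,b]\star y$, and then as $[a\star b,y]$. In effect you use it to say $u\star y=[u,y]$ whenever $u$ is a Lie product. Item (iv) only says $[a\star b,y]=[a,b]\star y$: it turns a Lie product inside a group commutator into a group commutator inside a Lie product. It says nothing about $(a\star b)\star y$. The same conflation reappears in (ii) as $[[a,b],x]=[a\star b,x]$, and your proof of $Z(G)\subseteq LZ(G)$ then relies on the resulting claim $[G,G]\subseteq G\star G$.

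These steps cannot be repaired, because what they are meant to prove is false. Your conclusion that $x\star y=[x,y]$ throughout a Lie-perfect $G$ would make every Lie-perfect multiplicative Lie algebra improper. Counterexample: take the additive group of $\mathfrak{sl}_2(\mathbb{Q})$, with standard basis $e,h,f$ and $x\star y$ the Lie bracket. This is a Lie ring, hence a multiplicative Lie algebra. It is Lie perfect, since $e=[h,\tfrac12 e]$, $f=[\tfrac12 f,h]$ and $h=[e,f]$. Yet $[G,G]$ and $LZ(G)$ are trivial while $Z(G)=G$, so in (i) both $G\star G\subseteq [G,G]$ and $Z(G)\subseteq LZ(G)$ fail. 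Likewise, the trivial structure on $A_5$ has $[G,G]=G$ but $G\star G=\{1\}$, so $[G,G]\subseteq G\star G$ in (ii) fails. The paper's one-line justification (induction with Definition \ref{s2d1}(2) and Proposition \ref{s2p1}(iv)) cannot give these inclusions either.

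What is true is exactly what (iv) yields directly.
\begin{itemize}
\item In (i), let $c\in LZ(G)$. Then $[a\star b,c]=[a,b]\star c=(c\star[a,b])^{-1}=1$, so $c$ commutes with every Lie product and hence with $G\star G=G$. Thus $LZ(G)\subseteq Z(G)$.
\item In (ii), let $c\in Z(G)$. Then $[a,b]\star c=[a\star b,c]=1$. Expanding $g\star c$ by Definition \ref{s2d1}(3) along a word in commutators for $g\in G=[G,G]$ gives $g\star c=1$. Hence $c\star g=(g\star c)^{-1}=1$, and $Z(G)\subseteq LZ(G)$.
\end{itemize}

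Your identity $w\star z=[w',z]$, for $w=\prod[a_i,b_i]$ and $w'=\prod(a_i\star b_i)$, is actually correct. The discrepancy between conjugating by $[a,b]$ and by $a\star b$ is absorbed by Proposition \ref{s2p1}(iii), since $[v',z]$ lies in the subgroup generated by Lie products. If you put the central element in the second slot, this identity gives the last inclusion at once. So the ``main obstacle'' you identified, and the false fact $G\star G\supseteq[G,G]$ you used to get past it, are unnecessary.
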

The following proposition  is crucial in the theory of Schur multiplier of multiplicative Lie algebras:

\begin{prp}\label{s2p3}(\cite{lu})
Let $G$ be a multiplicative Lie algebra such that $G/(LZ(G)\bigcap Z(G))$ is finite. Then $[G, G](G\star G)$ is finite.
\end{prp}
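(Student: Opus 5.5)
The plan is to reduce the statement to Schur's theorem for groups, together with a Lie analogue of it obtained from a Schur-type transfer/counting argument. Put $Z_0 = LZ(G)\cap Z(G)$. By hypothesis $Z_0$ is a central subgroup of finite index, say $n = [G:Z_0]$, and it is an ideal. By Schur's theorem, since $[G:Z(G)]\le n$ is finite, $[G,G]$ is finite. It therefore suffices to show that $G\star G$ is finite: the product $[G,G](G\star G)$ of two normal subgroups is a subgroup, and it is finite once both factors are.

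\textbf{Step 1: finitely many Lie products.} Let $T$ be a transversal of $Z_0$ in $G$, and take $x = tz$, $y = sw$ with $z,w\in Z_0$. By Definition \ref{s2d1}(3), $(tz)\star y = {}^{t}(z\star y)\,(t\star y) = t\star y$, because $z\in LZ(G)$ and, by Proposition \ref{s2p1}(ii), $z\star y = (y\star z)^{-1}$, so $y\star z=1$ as well. Similarly, (2) gives $t\star (sw) = (t\star s)\,{}^{s}(t\star w) = t\star s$. So the set $\{x\star y\}$ has at most $n^2$ elements. Also, by Proposition \ref{s2p1}(iv), $[(x\star y),z] = [x,y]\star z$, and conjugation permutes the Lie products by (5). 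Hence $G\star G$ is a normal subgroup generated by finitely many elements, and the conjugacy class of each generator has size at most $n$.

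\textbf{Step 2: $G\star G$ is a finite extension of something finite.} Consider $A = (G\star G)\cap Z_0$. Its index in $G\star G$ is at most $n$, and it is central. Hence $G\star G$ is finitely generated and centre-by-finite, so $[G\star G, G\star G]$ is finite by Schur's theorem. It remains to show that the abelianization of $G\star G$, or equivalently $A$, is a finitely generated abelian group that is torsion. The route is Schur's transfer argument. For $a = x\star y$ we have $a\star g$ determined modulo the coset structure, and for $z\in Z_0$ we have $z\star g = 1$. Take the transfer $V: G\to Z_0$, which satisfies $V(z)=z^n$ for $z\in Z_0$. One then wants to show $V(x\star y)=1$; this step is the main obstacle. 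Using (2) and (3), $x\mapsto x\star y$ behaves as a crossed homomorphism. Writing $x^n \in Z_0$ (possible once $G/Z_0$ is finite of order $n$), we get $x^n\star y = 1$. Expanding via (3) gives $\prod_{i} {}^{x^i}(x\star y)=1$. Since $x\star y$ has at most $n$ conjugates and the products differ from a power by elements of the finite group $[G,G]$, this yields $(x\star y)^{n}\in [G,G]$. Hence $(x\star y)^{nm}=1$ where $m=|[G,G]|$, up to commutator corrections that lie in the finite group $[G\star G,G\star G]$.

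\textbf{Step 3: conclusion.} The abelianization of $G\star G$ is then generated by finitely many elements (at most $n^2$) of bounded order. It is therefore finite, and so $G\star G$ is finite. Together with the finiteness of $[G,G]$, this shows that $[G,G](G\star G)$ is finite. The delicate part is the bookkeeping in Step 2, namely controlling the conjugation twists in $\prod_i {}^{x^i}(x\star y)$ modulo $[G,G]\cap(G\star G)$. If that bookkeeping fails, I would instead pass to the quotient $\bar G = G/[G,G]$. It is abelian, the twists vanish, and there $(\bar x\star \bar y)^n = 1$ holds directly. Pulling back gives $(G\star G)[G,G]/[G,G]$ finite, and hence the whole product is finite.
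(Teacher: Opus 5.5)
The paper gives no proof of this statement; it only cites \cite{lu}. So there is no in-paper argument to compare with. Judged on its own, your proof is correct, and it is simpler than your hedging suggests.

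Step~1 is right. Since $Z_0\subseteq LZ(G)$, axioms (2) and (3) together with Proposition \ref{s2p1}(ii) show that $x\star y$ depends only on $xZ_0$ and $yZ_0$. Hence $G\star G$ is generated by at most $n^2$ elements.

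The transfer $V$ plays no role and can be deleted. The identity you obtain from (3),
\[
1 \;=\; x^n\star y \;=\; {}^{x^{n-1}}(x\star y)\cdots {}^{x}(x\star y)\,(x\star y),
\]
reduces modulo the normal subgroup $[G,G]$ to $(x\star y)^n\in[G,G]$ exactly. This is because ${}^{x^i}(x\star y)=[x^i,x\star y]\,(x\star y)$. There are no residual ``commutator corrections'' and no delicate bookkeeping.

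Hence $(G\star G)[G,G]/[G,G]$ is an abelian group generated by at most $n^2$ elements of order dividing $n$, so its order is at most $n^{n^2}$. Combined with Schur's theorem for $[G,G]$, this gives $|[G,G](G\star G)|\le n^{n^2}\,|[G,G]|$.

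This is precisely your fallback, and it makes the detour through $A=(G\star G)\cap Z_0$ and the abelianization of $G\star G$ unnecessary. That detour is nevertheless also valid, since $[G\star G,G\star G]\subseteq[G,G]$ is finite and $(x\star y)^{nm}=1$.

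One small precision. If you phrase the fallback as computing $\bar x\star\bar y$ in $\bar G=G/[G,G]$, you are using that $[G,G]$ is an ideal. This is true by Proposition \ref{s2p1}(iv),(v) and axiom (3), and the paper asserts it. Reducing the displayed group identity modulo $[G,G]$ needs only normality.
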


Throughout $\cat{ML}$ will denote the category of multiplicative Lie algebras, $\cat{LZ}$ will denote the category of $\Z$-Lie algebras, $\cat{ML_{tri}}$ will denote the category of trivial multiplicative Lie algebras, and $\cat{ML_{imp}}$ will denote the category of improper multiplicative Lie algebras. Clearly, the category $\cat{ML_{tri}}$ and $\cat{ML_{imp}}$ are isomorphic to the category $\cat{GP}$ of groups. All these categories are subcategories of $\cat{ML}$, and we have the inclusion functors. We have adjoints to these functors. The group abelianization functor $G\mapsto G/[G, G]$ is adjoint to the inclusion functor from $\cat{LZ}$ to $\cat{ML}$, $G\mapsto G/(G\star G)$ is adjoint to the inclusion functor from $\cat{ML_{tri}}$ to $\cat{ML}$. The following proposition describes the adjoint to the inclusion functor (forgetful functor) from $\cat{ML_{imp}}$ to $\cat{ML}$. 

\begin{prp}\label{s2p4}
Let $G$ be a multiplicative Lie algebra. Let $A(G)$ denote the ideal of $G$ generated by $\{(x\star y)^{-1}[x, y]\mid x, y\in G\}$. Then $G/A(G)$ is improper multiplicative Lie algebra such that for any Lie algebra homomorphism $f$ from $G$ to an improper multiplicative Lie algebra $K$, there is a unique homomorphism $\overline{f}$ from $G/A(G)$ to $K$ such that $\overline{f}o\nu\ =\ f$. Indeed, $A(G)$ is the smallest ideal  of $G$ by which if we factor, we get an improper multiplicative Lie algebra.
\end{prp}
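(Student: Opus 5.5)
We prove Proposition \ref{s2p4} by the standard reflector argument: identify the ideal, check that the quotient is improper, then verify the universal property via kernels. Throughout, $\nu\colon G\to G/A(G)$ is the quotient map. Since $A(G)$ is an ideal, the quotient $G/A(G)$ is a multiplicative Lie algebra with product $(xA(G))\star(yA(G)) = (x\star y)A(G)$. This is well defined by the usual isomorphism-theorem remarks recalled before Proposition \ref{s2p2}. Because $(x\star y)^{-1}[x,y]\in A(G)$ for all $x,y$, we get
\[
(xA(G))\star(yA(G)) = (x\star y)A(G) = [x,y]A(G) = [xA(G),yA(G)].
\]
Hence the Lie product on $G/A(G)$ is the commutator, and $G/A(G)$ is improper.

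For the universal property, let $f\colon G\to K$ be a multiplicative Lie algebra homomorphism with $K$ improper, so that $a\star b=[a,b]$ in $K$. Then
\[
f\bigl((x\star y)^{-1}[x,y]\bigr) = (f(x)\star f(y))^{-1}[f(x),f(y)] = [f(x),f(y)]^{-1}[f(x),f(y)] = 1.
\]
Thus every generator of $A(G)$ lies in $\ker f$. The key point is that $\ker f$ is an ideal of $G$: it is a normal subgroup, and for $g\in G$ and $h\in\ker f$ we have $f(g\star h)=f(g)\star 1=1$ by Proposition \ref{s2p1}(i). Since $A(G)$ is the smallest ideal containing the generators, $A(G)\subseteq \ker f$. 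So $f$ factors through a group homomorphism $\overline f\colon G/A(G)\to K$ with $\overline f\circ\nu=f$. It preserves $\star$ because
\[
\overline f\bigl((xA(G))\star(yA(G))\bigr) = f(x\star y) = f(x)\star f(y).
\]
It is unique because $\nu$ is surjective.

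For minimality, let $I$ be any ideal with $G/I$ improper. Apply the argument above to the quotient map $G\to G/I$: its kernel $I$ is an ideal containing all $(x\star y)^{-1}[x,y]$, hence $I\supseteq A(G)$. Conversely, $A(G)$ itself yields an improper quotient, as shown in the first step. So $A(G)$ is the smallest such ideal.

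The only step requiring care is the assertion that ideals behave as expected, namely that $\ker f$ is an ideal and that the quotient by an ideal is a multiplicative Lie algebra. Both follow from Definition \ref{s2d1} and Proposition \ref{s2p1}(i). Otherwise the proof is formal.
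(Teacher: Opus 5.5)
The paper states Proposition \ref{s2p4} without proof; it is followed only by the remark that $A(G)\subseteq C_G(G\star G)\cap (G\star G)[G,G]$, which is an instance of the minimality clause you prove. So there is no argument of the authors to compare against, and your reflector argument is correct and complete as it stands.

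The one step you delegate to the paper's ``as usual'' is that $\star$ descends to $G/A(G)$. Because the paper defines ideals one-sidedly ($g\star h\in H$ for $h\in H$), it is worth recording why. For $a,b\in A(G)$, identities (3) and (2) of Definition \ref{s2d1} give $(xa)\star(yb)={}^{x}\bigl(a\star(yb)\bigr)\,(x\star y)\,{}^{y}(x\star b)$. Here $x\star b\in A(G)$ directly, and $a\star(yb)=\bigl((yb)\star a\bigr)^{-1}\in A(G)$ by Proposition \ref{s2p1}(ii). Normality then puts $(xa)\star(yb)$ in $(x\star y)A(G)$.

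Your closing attribution should therefore cite Proposition \ref{s2p1}(ii) (antisymmetry) alongside (i). This is a bookkeeping correction, not a gap.
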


Observe further that $A(G)\subseteq C_{G}(G\star G)\bigcap (G\star G)[G, G]$ for $G/(C_{G}(G\star G)\bigcap (G\star G)[G, G])$ is improper multiplicative Lie algebra.

The adjoint to the forgetful functor from $\cat{ML}$ to $\cat{GP}$ and from $\cat{ML}$ to the category $\cat{SET}$ of sets will be discussed in the next section.

\section{Some universal constructions, Free Multiplicative Lie Algebras, and Presentations}
We start with a more general structure.

\begin{df}\label{s3d1}
A group $G$ together with a binary operation $\star $ on $G$ will be termed as a pseudo-multiplicative Lie algebra if the identities (2), (3) and (5) of Definition \ref{s2d1} hold. More explicitly, a group $G$ with an operation $\star$ on $G$ will be termed as a pseudo-multiplicative Lie algebra if 

\begin{enumerate}
	\item[(i)] $x\star (y\cdot z)\ =\ (x\star y)\cdot ^{y}(x\star z)$,
	\item[(ii)] $(x\cdot y)\star  z\ =\ ^{x}(y\star z)\cdot (x\star z)$, and
	\item[(iii)] $^{x}(y\star z )\ =\ ^{x}y\star ^{x}z$
for all $x, y, z\in G$.
\end{enumerate}

If in addition
\begin{enumerate}
	\item[(iv)] $x\star x\ =\ 1$ for all $x\in G$,
\end{enumerate}

we will term it as  semi-multiplicative Lie algebra.
\end{df}

Evidently, every multiplicative Lie algebra is a pseudo multiplicative Lie algebra and also a semi-multiplicative Lie algebra. However, a semi-multiplicative Lie algebra need not be a multiplicative Lie algebra.  A pseudo-multiplicative Lie algebra on an abelian group is precisely a ring (not necessarily associative) structure on the group and it is a semi-multiplicative Lie algebra if in addition to it $x^{2}\ =\ 0$ for all $x$. However, it can be easily observed that  an associative  ring $R$ is a multiplicative Lie algebra if and only if (i) $x^{2}\ =\ 0$ and (ii) $3xyz\ =\ 0$ for all $x, y, z\in R$. A non-associative ring is multiplicative Lie algebra if and only if it is a Lie ring.

\begin{rmk}
 From the proof of the respective identities in Proposition \ref{s2p1}, it follows that it still holds for semi-multiplicative Lie algebra. The identity 4 in the definition of multiplicative algebra is not used in establishing the identities. Furthermore, all the identities in Proposition \ref{s2p1} hold without the identities 1 and 4 except (ii).
\end{rmk}

We have a category $\cat{SMult}$ of semi-multiplicative Lie algebras with obvious morphisms. The forgetful functors from $\cat{SMult}$ to $\cat{SET}$ and $\cat{GP}$ have left adjoint to be described below.

Let $X$ be a set and $F(\beta (X))$ be the free group on the set $\beta (X)\ =\ \bigcup_{n\in \N}\beta_{n}$ of all bracket arrangements of members of  $X$, where $\beta_{n}(X)$ denotes the set of bracket arrangements of weight $n$ of members of $X$. We have an obvious operation $\star$ on $\beta(X)$ defined by $x\star y\ =\ \beta_2(x, y)$ for all $x, y\in \beta (X)$. Forced by the defining conditions (i) and (ii) of semi-multiplicative Lie algebra, using induction on the lengths of freely reduced words in $F(\beta (X))$, we extend this operation to an operation  $\hat{\star}$ on $F(\beta (X))$ which satisfies the identities (i) and (ii) of Definition \ref{s3d1}. First, we define $1\hat{\star }w \ =\ 1\ =\ w\hat{\star } 1$ for all $w\in F(\beta (X))$.  Next, we define $u\hat{\star}w$ for each reduced word $w$ in $F(\beta (X))$, where $u$ is a reduced word in $F(\beta (X))$. This we do by the induction on the length $l(u)$ of $u$. Suppose that $l(u)\ =\ 1$. Then $u\in \beta (X)$ or else $u^{-1}\in \beta (X)$. Suppose that $u\in \beta (X)$. Again, we use induction on $l(w)$ to define $u\hat{\star }w$. If $l(w)\ =\ 1$, then $w\in \beta (X)$ or else $w^{-1}\in \beta (X)$. If $w\in \beta (X)$, then already $u\hat{\star} w\ =\ (u, w)$. Suppose that  $w^{-1}\in \beta (X)$. Then the identities

\[1\ =\ u\hat{\star} (w^{-1}w)\ =\ (u\hat{\star}w^{-1})^{w^{-1}}(u\hat{\star} w)\]

\noindent forces us to put $u\hat{\star} w\ =\ w(u\hat{\star}w^{-1})^{-1}w^{-1}$ or equivalently, $w^{-1}(u\hat{\star} w)w\ =\ u\hat{\star}w^{-1}$. Suppose that $u\hat{\star}w$ has already been defined for all words $w$ of length $n$. Let $v\ =\ wx$ be a word of length $n+1$, where $w$ is a word of length $n$ and $x$ is a word of length 1. Then put $u\hat{\star}v\ =\ (u\hat{\star}w)(w(u\hat{\star}x)w^{-1})$. This defines $u\hat{\star{w}}$ for all $w\in F(\beta (X))$. Next if $u^{-1}\in \beta (X)$, then put $u\hat{\star}w\ =\ u(u^{-1}\hat{\star}w)^{-1}u^{-1}$. Thus, $u\hat{\star}w$ has been defined for all words $u$ of length 1 and all members $w$ of $F(\beta (X))$. Finally, suppose that $u\hat{\star}w$ has already been defined for words $u$ of length $n$ and all $w\in F(\beta (X))$. Let $ux$ be a word of length $n+1$. The condition (ii) forces us to put $(ux)\hat{\star}w\ =\ (u(x\hat{\star}w)u^{-1})(u\hat{\star}w)$. This defines the operation $\hat{\star}$ on $F(\beta (X))$.  From the construction, it follows that $(F(\beta (X)), \hat{\star} )$ is a structure satisfying (i) and (ii) of Definition \ref{s3d1}. 

Let $S$ be a subset of $F(\beta (X))$ describing certain identities. We describe the ideal of the structure $(F(\beta (X)), \hat{\star} )$ generated by $S$. Define $\Gamma^{n}(S)$ inductively as follows: Put $\Gamma^{1}(S)\ =\ \Gamma (S)$ to be the subgroup of $F(\beta (X))$ generated  by $\{wuw^{-1}\mid w\in F(\beta (X))\ and\ u\in S\bigcup F(\beta (X))\hat{\star}S\}$. Assuming that $\Gamma^{n}(S)$ has already been defined, define $\Gamma^{n+1}(S)\ =\ \Gamma (\Gamma^{n}(S))$. Clearly, the ideal $< S>$ generated by $S$ is $\bigcup_{n\in \N}\Gamma^{n}(S)$. It follows that $F(\beta (X))/< S> $ is a free structure on $X$ for a structure $\star $ on a group described by the identities (i), (ii) in $S$ and the identities given by $S$. In particular, $F(\beta (X))/ <S>$  denoted by $SLF(X)$ is a free semi-multiplicative Lie algebra on $X$, where

\begin{align*}
 S & = \{(u\hat{\star}u), (u\hat{\star}(vw))^{v}((u\hat{\star}w)^{-1})(u\hat{\star}v)^{-1}, (uv\hat{\star}w)(u\hat{\star}w)^{-1}(^{u}((v\hat{\star}w)^{-1})), ^{w}(u\hat{\star}v)^{-1}(^{w}u\hat{\star}^{w}v) \\
   & \mid u, v, w\in F(\beta (X))\}.
\end{align*}

Thus, the functor $SLF$ from the category $\cat{SET}$ of sets to the category $\cat{SML}$ of semi-multiplicative Lie algebras is adjoint to the forgetful functor from $\cat{SML}$ to $\cat{SET}$. 

Next, given a semi-multiplicative Lie algebra $(G, \star)$, the ideal $A$ of $(G, \star )$ generated by the subset

\[\{((x\star y)\star ^{y}z )\cdot ((y\star z)\star ^{z}x )\cdot ((z\star x)\star ^{x}y )\}\]

\noindent of $G$ is the smallest ideal of $G$ such that $G/A$ is multiplicative Lie algebra and the association $G\mapsto G/A$ is a functor which is adjoint to the forgetful functor from $\cat{Mult}$ to $\cat{SMult}$. In particular, $F(\beta (X))/ <U>$ denoted by $LF(X)$ is free multiplicative Lie algebra on $X$, where 

\begin{center}
$U\ =\ S\bigcup \{((x\star y)\star ^{y}z )\cdot ((y\star z)\star ^{z}x )\cdot ((z\star x)\star ^{x}y )\}$.
\end{center}

The functor $LF$ is an adjoint to the forgetful functor from $\cat{ML}$ to $\cat{SET}$.

Next, let $G$ be a group. Treating $G$ as a set, consider the ideal $< R >$ of $F(\beta (G))$  generated by the set $R\ =\ S\bigcup \{xy(xy)^{-1}\mid x, y\in G\}$. The semi-multiplicative Lie algebra $\hat{S}(G)\ =\  F(\beta (G))/<R>$ together with the natural group homomorphism $i$ from $G$ to $\hat{S}(G)$ given by $i(g)\ =\ g< R >$ is a free semi-multiplicative Lie algbra on the group $G$ in the sense that if $f$ is a group homomorphism from $G$ to a semi-multiplicative Lie algebra $H$, then there is a unique Lie homomorphism $\overline{f}$ from $\hat{S}{G}$ to $H$ such that $ \overline{f}\circ i\ =\ f$. Consequently, the association $G\mapsto \hat{S}{G}$ is a functor $\hat{S}$ from $\cat{SMULT}$ to $\cat{GP}$ which is an adjoint to the forgetful functor from $\cat{SMULT}$ to $\cat{GP}$. 

Similarly, $ LF (G)\ =\ F(\beta (G))/ <\hat{R}>$ is free multiplicative Lie algebra on the group $G$, where $\hat{R}\ =\ U \bigcup \{xy(xy)^{-1}\mid x, y\in G\}$, and the functor $LF$ is an adjoint to the forgetful functor from $\cat{ML}$ to $\cat{GP}$.

Now, we introduce the notion of presentation of a semi-multiplicative Lie algebra and also of multiplicative Lie algebras as usual. Let $(G, \star)$ be a semi-multiplicative Lie algebra (respectively multiplicative Lie algebra ). A pair $(X, R)$ together with a map $\phi$ from $X$ to $G$, where $R$ is a subset of $F(\beta (X))$ is called a presentation of $(G, \star)$ if the kernel of the unique Lie algebra homomorphism $\overline{\phi}$ from $SLF(X)$ to $G$ is $< R\bigcup S>/< S>$ (respectively $<R\bigcup U >/< U >$). More explicitly, a short exact sequence

\begin{center}
$1\longrightarrow H\stackrel{i}{\rightarrow}SLF(X)\stackrel{\alpha}{\rightarrow}G\longrightarrow 1$
\end{center}

\noindent may be termed as a presentation of $(G, \star)$. In particular, we have the canonical multiplication table presentation of $(G, R )$ of the semi-multiplicative Lie algebra $(G, \star )$, where $R\ =\ \{(xy)^{-1}xy\mid x, y\in G\}\bigcup \{(x, y)^{-1}(x\star y)\mid x, y\in G\}$.

\begin{rmk}
Given a set $X$, the Lie ring $\frac{LF(X)}{[LF(X), LF(X)]}$ is a free Lie ring on the set $X$. More explicitly, $\frac{LF(X)}{[LF(X), LF(X)]}$ is a free object in the category $\cat{LR}$ of Lie rings. Thus, under the abelianizer functor from $\cat{ML}$ to $\cat{LR}$, the free objects correspond.
\end{rmk}

Observe that the category of $\Z$-Lie algebra is a full subcategory of the category $\cat{ML}$ of multiplicative Lie algebras. The following proposition asserts that the free $\Z$-Lie algebra on abelian group is free multiplicative Lie algebra on the group.

\begin{prp}\label{s3p1}
Let $A$ be an abelian group. Then the group part of  free semi-multiplicative Lie algebra $SLF(A)$ on the group $A$ is an abelian group.  
\end{prp}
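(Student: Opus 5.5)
The plan is to build $SLF(A)$ up in layers from the image of $A$, and to show by induction that each layer is abelian. The key tool is the commutator identity $[(x\star y), z]\ =\ ([x, y]\star z)$ of Proposition \ref{s2p1}(iv). By the remark following Definition \ref{s3d1}, this identity, together with $1\star x = 1 = x\star 1$ from Proposition \ref{s2p1}(i), holds in any semi-multiplicative Lie algebra; identity (4) of Definition \ref{s2d1} is not needed.

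\textbf{Step 1 (layers).} Write $G = SLF(A)$ and let $i: A\to G$ be the universal group homomorphism. Put $G_{1} = i(A)$. Define $G_{n+1}$ to be the subgroup of $G$ generated by $G_{n}\cup\{x\star y\mid x, y\in G_{n}\}$, and let $G_{\infty} = \bigcup_{n} G_{n}$.

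\textbf{Step 2 ($G_{\infty}$ is all of $G$).} The sets $G_{n}$ form an increasing chain of subgroups, so $G_{\infty}$ is a subgroup. It is closed under $\star$: if $x, y\in G_{\infty}$, then both lie in some common $G_{n}$, so $x\star y\in G_{n+1}$. Hence $G_{\infty}$ is a semi-multiplicative Lie subalgebra of $G$ containing $i(A)$. Now compare two homomorphisms out of $G$: the identity of $G$, and the composite of the inclusion $G_{\infty}\hookrightarrow G$ with the unique homomorphism $G\to G_{\infty}$ extending $i$. Both extend $i$, so by uniqueness in the universal property of the free semi-multiplicative Lie algebra on the group $A$ they coincide. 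Therefore $G_{\infty} = G$. Alternatively, one can read this directly from the construction $F(\beta(A))/\langle R\rangle$: it is generated as a group by the images of the bracket arrangements, and every bracket arrangement of weight $n$ lies in $G_{n}$.

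\textbf{Step 3 (each layer is abelian).} We prove by induction that $G_{n}$ is abelian. The base case holds because $G_{1}$ is a homomorphic image of the abelian group $A$. Suppose $G_{n}$ is abelian and take $x, y\in G_{n}$. Then $[x, y] = 1$, so for every $z\in G$ we get
\[[(x\star y), z]\ =\ ([x,y]\star z)\ =\ 1\star z\ =\ 1,\]
where the last equality is Proposition \ref{s2p1}(i). Thus every element $x\star y$ with $x, y\in G_{n}$ is central in $G$. Consequently $G_{n+1}$ is generated by the abelian group $G_{n}$ together with central elements, and so $G_{n+1}$ is abelian. This completes the induction.

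\textbf{Conclusion.} An increasing union of abelian subgroups is abelian, so $G = G_{\infty}$ is abelian, which proves Proposition \ref{s3p1}.

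The only point requiring care is the claim that Proposition \ref{s2p1}(iv) and (i) are available with only identities (i)--(iv) of Definition \ref{s3d1}, that is, without identity (4) of Definition \ref{s2d1}. Should the cited remark be doubted, I would re-derive (iv) directly. Expand $(xy)\star z$ using Definition \ref{s3d1}(ii), then rewrite $^{x}(y\star z)$ via Definition \ref{s3d1}(iii). Do the same with $x$ and $y$ interchanged, and combine the two expansions with $(x\star y)(y\star x) = 1$. That last identity comes from expanding $(x y)\star(x y) = 1$ by Definition \ref{s3d1}(i) and (ii) and using $x\star x = 1 = y\star y$. This should produce $[(x\star y), z] = [x,y]\star z$ by a routine computation.
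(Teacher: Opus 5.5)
Your argument is correct. Its skeleton matches the paper's: induct on bracket depth and show that $\star$-products of commuting elements commute with the rest. The key identity, however, is different.

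The paper uses two facts. Axiom (5), ${}^{a}(x\star y)={}^{a}x\star{}^{a}y$, gives that elements of $A$ commute with $x\star y$ for $x,y\in A$. Proposition~\ref{s2p1}(iii), ${}^{x\star y}(u\star v)={}^{[x,y]}(u\star v)$, gives that two such products commute. The paper then just says ``inductively''.

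You use Proposition~\ref{s2p1}(iv) instead, which shows in one stroke that $x\star y$ is central in all of $SLF(A)$ as soon as $[x,y]=1$. This makes your inductive step a one-liner. Your layers $G_n$, together with the universal-property argument for $G_\infty=G$, also make explicit what the paper's ``inductively'' leaves unsaid, namely that iterated brackets exhaust the algebra.

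The price is that (iv) is the heavier identity. Identity (iii) falls out of expanding $(xy)\star(zw)$ in two orders. Identity (iv) is normally obtained from (iii): write $z={}^{y}k$, expand $x\star z$ in the second variable, and use (iii) to move $x\star y$ past ${}^{y}(x\star k)$.

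Your fallback sketch (two first-variable expansions plus antisymmetry) is not really a derivation as written. The paper's remark already grants (iv) for semi-multiplicative Lie algebras, so your proof does not depend on it.
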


\begin{proof}
From (5), it follows that the elements of $A$ commutes with $x\star y$ for all $x, y\in A$, and from Proposition \ref{s2p1}(iii) it follows  that $x\star y$ commutes with $u\star v$ for all $x, y, u, v\in A$ Inductively, it follows that $LF(A)$ is commutative as a group.
\end{proof}

Since a multiplicative Lie algebra structure  on an abelian group is precisely a $\Z$-Lie algebra, the following corollary is immediate from the above Proposition. 

\begin{cor}\label{s3p1c1}
The free multiplicative Lie algebra on an abelian group is precisely free $\Z -$ Lie algebra on $A$.
\end{cor}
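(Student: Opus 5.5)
The plan is to compare universal properties. Write $LF(A)$ for the free multiplicative Lie algebra on the abelian group $A$, with structure map $i\colon A\to LF(A)$, and $L(A)$ for the free $\Z$-Lie algebra (Lie ring) on $A$, with structure map $j\colon A\to L(A)$. I would show that $LF(A)$ is itself a $\Z$-Lie algebra and that it has the universal property of $L(A)$ among Lie rings. Uniqueness of free objects up to unique isomorphism then gives $LF(A)\cong L(A)$ compatibly with $i$ and $j$.

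\emph{Step 1: $LF(A)$ has abelian group part.} Proposition \ref{s3p1} gives this for $SLF(A)$, which is the same object as $\hat{S}(A)$. By the construction in this section, $LF(A)=F(\beta(A))/\langle\hat{R}\rangle$ with $\hat{R}\supseteq R$. Hence $LF(A)$ is a quotient of $\hat{S}(A)$, namely by the ideal generated by the Jacobi elements. A quotient group of an abelian group is abelian, so $LF(A)$ is abelian.

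If I wanted to avoid relying on the quotient description, I would instead rerun the argument of Proposition \ref{s3p1} directly inside $LF(A)$. By (5), elements of $A$ commute with the generators $x\star y$. By Proposition \ref{s2p1}(iii), $^{x\star y}(u\star v)={}^{[x,y]}(u\star v)$, and $[x,y]=1$ in $A$. Induction on bracket length then shows that all generators commute.

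\emph{Step 2: $LF(A)$ is a Lie ring.} The paper notes that a multiplicative Lie algebra structure on an abelian group is exactly a $\Z$-Lie algebra structure. Concretely, conjugation is trivial, so (2) and (3) become biadditivity, (1) becomes $x\star x=0$, and (4) becomes the Jacobi identity. So $LF(A)$ is a Lie ring, and $i$ is a group homomorphism from $A$ into its additive group.

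\emph{Step 3: universal property.} Let $f\colon A\to L$ be an additive homomorphism into a Lie ring $L$. Since $L$ is a multiplicative Lie algebra, the universal property of $LF(A)$ yields a unique homomorphism $\bar f\colon LF(A)\to L$ of multiplicative Lie algebras with $\bar f\circ i=f$. Between abelian multiplicative Lie algebras, these homomorphisms are exactly Lie ring homomorphisms. Therefore $(LF(A),i)$ is the free Lie ring on $A$.

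The main obstacle is Step 1. Proposition \ref{s3p1} is stated for the semi-multiplicative free object, and its proof is only sketched. The key point is to check that passing to the multiplicative Lie algebra quotient preserves commutativity, and the quotient argument above handles this. A second point to confirm is that the inductive commutation argument covers inverses and products of generators, and not only the brackets themselves; this follows because the commuting elements form a subgroup. The remaining steps are formal.
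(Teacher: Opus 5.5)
Your proposal is correct and takes essentially the same route as the paper. The paper declares the corollary immediate from Proposition \ref{s3p1} (abelian group part) together with the fact that a multiplicative Lie algebra structure on an abelian group is exactly a $\Z$-Lie algebra. You make explicit two details the paper leaves implicit: the passage from the semi-multiplicative free object $\hat{S}(A)$ to its quotient $LF(A)$ (or your direct commutation argument inside $LF(A)$), and the universal-property comparison.
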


Let $G$ be a group. Let $\star$ and $\hat{\star}$ denote the Lie product in $LF(G)$ and $SLF(G)$ respectively. For any bracket arrangement in $\beta_{n}$ of weight $n$, we define subset $\hat{\beta}_{n}(G)$ of $SLF(G)$ and subset $\beta_{n}(G)$ of $LF(G)$ by induction as follows: We put $\hat{\beta}_{0}(G)\ =\ \{e\}\ =\ \beta_{0}(G)$, $\hat{\beta}_{1}(G)\ =\ G\ =\ \{g\equiv\ g<S >\in SLF(G)\}$, $\beta_{1}(G)\ =\ G\ =\ \{g\equiv\ g<U >\in LF(G)\}$,  $\hat{\beta}_{2}(G)\ =\ \{g\hat{\star} h\mid g, h\in G\}$, $\beta_{2}(G)\ =\ \{g\star h\mid g, h\in G\}$. Suppose that $\hat{\beta}_{r}(G)$ and $\beta_{r}(G)$ have been defined for all $r\leq n, n\geq 3$. Then $\beta_{n}\ =\ \beta_2(\beta_{r}, \beta_{s})$ for some bracket arrangements $\beta_{r}$, and $\beta_{s}$ of lengths $r$ and $s$ respectively, $r, s < n$. We define $\hat{\beta}_{n}(G)\ =\ \{x\hat{\star} y\mid x\in \hat{\beta}_{r}(G)\ and\ y\in \hat{\beta}_{s}(G)\}$.  Let $\hat{T}_{n}(G)$ denote the subgroup of $SLF(G)$ generated by the set $\hat{t}_{n}(G)$, and $T_{n}(G)$ the subgroup of $LF(G)$ generated by the set $t_{n}(G)$,   where $t_{n}$ denotes the simple bracket arrangement of weight $n$. More explicitly, $\hat{T}_{n}(G)\ =\ < \{((\cdots (x_{1}\hat{\star} x_{2})\hat{\star} x_{3})\cdots \hat{\star} x_{n-1})\hat{\star} x_{n})\mid x_{i}\in G\}>$, and $T_{n}(G)\ =\ < \{((\cdots (x_{1}\star x_{2})\star x_{3})\cdots \star x_{n-1})\star x_{n})\mid x_{i}\in G\}>$. In particular $G\approx \hat{T}_{1}(G)\subset SLF(G)\approx T_{1}(G)\subset LF(G)$. Thus, for example $\hat{T}_{1}(G)\ =\ T_{1}(G)= \ G$,  $T_{2}(G)\approx\ F(t_{2}(G))/B_{2}(G)\ \approx\ G\wedge G\approx \hat{T}_{2}(G)$, and $T_{3}(G)\approx\ S_{3}(G)/(S_{3}(G)\bigcap H_{2}(G))$.  The identity map $I_{G}$ on $G$ is a homomorphism from $G$ to $G$ treated as improper multiplicative Lie algebra. From the universal property of $SLF(G)$ and that of $LF(G)$, we get a unique surjective multiplicative Lie algebra homomorphism $\hat{\eta} $ from $SLF(G)$ to $G$ and $\eta$ from $LF(G)$ to $G$ which take $a\star b$ to $[a, b]$ or more generally, it maps $\hat{T}_{n}(G)$ and $T_{n}(G)$ to $\gamma_n(G)$ and the restriction maps are surjective homomorphisms. We denote the restriction map by $\eta_{n}$. Trivially, $\eta_{1}$ is a group isomorphism. In \cite{ell2}, $\eta_{2}$ and $\eta_{3}$ are shown to be isomomorphism, and in \cite{dl}, $\eta_{4}$ is also shown to be an isomorphism under the restriction that $G$ is free group. It justifies that the relations defining multiplicative Lie algebra is a set of universal defining relations for $n$ length commutator relations ($n=2, 3, 4$). In \cite{ell2}, it is also conjectured that $\eta_{n}$ is an isomorphism for each $n$ provided that $G$ is free. We look at the problem from different angle and try to find universal $n$-commutator relations and relate it with the above approach. In turn, we introduce the notion of $n$-Schur multiplier $M_{n}(G)$ with $M_{2}(G)$ the usual Schur multiplier for finite groups. 

\begin{prp}\label{s3p2}
$T_{n}(G)$ is a normal subgroup of $LF(G)$ and $\hat{T}_{n}(G)$ is a normal subgroup of $SLF(G)$ for all $n\geq 2$. However $T_{1}(G)= G$ need not be normal in $LF(G)$. 
\end{prp}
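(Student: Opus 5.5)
We must show that $T_n(G)$ is normal in $LF(G)$ for $n\ge 2$, and similarly $\hat T_n(G)$ in $SLF(G)$. It suffices to show that the generating set $t_n(G)=\{((\cdots(x_1\star x_2)\star\cdots)\star x_n)\mid x_i\in G\}$ is stable under conjugation by a set of group generators of $LF(G)$. The only identity we use is the conjugation identity (5) of Definition \ref{s2d1}, which is also condition (iii) of Definition \ref{s3d1}. So the same argument works verbatim in the semi-multiplicative setting $SLF(G)$.

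\textbf{Step 1: group generators of $LF(G)$.} Since $LF(G)=F(\beta(G))/\langle\hat R\rangle$, the group $LF(G)$ is generated as a group by the images of bracket arrangements of elements of $G$. By construction, each such image is an iterated $\star$-product $w=w_1\star w_2$ with $w_1,w_2$ of smaller weight, bottoming out in elements of $G$.

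\textbf{Step 2: conjugation by arbitrary elements.} By induction on weight, using (5), we show that for every $z\in LF(G)$ and every $w\in\beta_m(G)$ (weight $m\ge 2$), the conjugate ${}^{z}w$ is again a product of the same shape but with new letters.

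\begin{itemize}
\item For a simple bracket this reads
\[{}^{z}((\cdots(x_1\star x_2)\cdots)\star x_n)=((\cdots({}^{z}x_1\star {}^{z}x_2)\cdots)\star {}^{z}x_n).\]
Applying (5) repeatedly is legitimate because (5) holds for all elements of $LF(G)$, not only for elements of $G$.
\item The difficulty is that ${}^{z}x_i$ need not lie in $G$ when $z\notin G$. If $z\in G$, then ${}^{z}x_i\in G$ and the conjugate lies in $t_n(G)$.
\item For a general $z$, it suffices to treat $z$ running over the generators, i.e.\ $z$ a bracket $u\star v$ with $u,v$ of smaller weight. Here Proposition \ref{s2p1}(iii) gives ${}^{u\star v}(a\star b)={}^{[u,v]}(a\star b)$. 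Since $n\ge 2$, the conjugated element is of the form $a\star b$, so conjugation by $u\star v$ equals conjugation by $[u,v]=uvu^{-1}v^{-1}$.
\item Iterating on the weights of $u$ and $v$, conjugation of a $\star$-product by any element of $LF(G)$ reduces to conjugation by a word in elements of $G$. Those conjugations preserve $t_n(G)$, by the first bullet.
\item Inverses are handled automatically, since we only need stability under conjugation by generators and their inverses, and the set of $z$ that preserve $T_n(G)$ is a subgroup.
\end{itemize}

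\textbf{Main obstacle.} The delicate step is the induction reducing conjugation by an arbitrary element to conjugation by elements of $G$. One must check that Proposition \ref{s2p1}(iii) (valid in semi-multiplicative algebras by the Remark after Definition \ref{s3d1}) may be applied with $u,v$ themselves arbitrary elements. It can, because the identity is universal. One also needs the induction on weight to terminate, which it does since weights strictly drop. This is exactly where $n\ge 2$ is used: the conjugated element must itself be a $\star$-product.

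\textbf{Failure for $n=1$.} For $n=1$ we exhibit a counterexample. Take $G=\Z$ (or any group with a nontrivial $\star$-product in $LF(G)$) and conjugate $g\in G$ by $x\star y$. Normality of $G$ in $LF(G)$ would force ${}^{x\star y}g\in G$. Applying the quotient map $LF(G)\to LF(G)/[LF(G),LF(G)]$, or better comparing with a suitable quotient such as $G\rtimes \mathrm{Inn}(G)$ from Example (iii), shows this fails. Concretely, I would choose a non-abelian $G$ and map $LF(G)$ onto a multiplicative Lie algebra $K$ containing $G$ in which the image of $x\star y$ conjugates $G$ outside itself. In $G\rtimes \mathrm{Inn}(G)$ with $G$ embedded diagonally, conjugating by $([a,b],1)$ moves the copy of $G$ off itself, giving the desired non-normality.
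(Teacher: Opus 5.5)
Your argument for the normality of $T_n(G)$ and $\hat T_n(G)$, $n\ge 2$, is the paper's argument. Identity (5) handles conjugation by elements of $G$. Proposition~\ref{s2p1}(iii) replaces conjugation by a bracket $u\star v$ on $\star$-products with conjugation by $[u,v]$. Induction on the weight of the conjugating bracket arrangement then finishes. Treating all $n$ at once, as you do, is fine; the paper's extra induction on $n$ is not actually needed.

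The gap is in the claim that $T_1(G)=G$ need not be normal. The paper asserts this without proof, so your argument has to carry it alone, and as written it does not.

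\begin{itemize}
\item $G=\Z$ cannot work. By Proposition~\ref{s3p1} and Corollary~\ref{s3p1c1}, $LF(A)$ is abelian for abelian $A$, so $G$ is even central. For the same reason, passing to $LF(G)/[LF(G),LF(G)]$ can never detect non-normality.
\item The universal property of $LF(G)$ needs a group homomorphism $G\to G\rtimes\mathrm{Inn}(G)$. The literal diagonal $g\mapsto (g,i_g)$ is not one, since $(g,i_g)(k,i_k)=(g^{2}kg^{-1},i_{gk})$.
\item ``Non-abelian'' is not enough. If $G$ has class $2$ (e.g.\ $Q_8$), then $([a,b],I_G)$ is central in $G\rtimes\mathrm{Inn}(G)$, so conjugating by it moves nothing. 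You need $\gamma_3(G)\neq 1$.
\end{itemize}

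A correct version runs as follows. The map $f(g)=(1,i_g)$ is a homomorphism $G\to G\rtimes\mathrm{Inn}(G)$. Its Lie extension $\bar f$ to $LF(G)$ satisfies $\bar f(a\star b)=([a,b],I_G)$. Then ${}^{([a,b],I_G)}(1,i_g)=([[a,b],g],i_g)$, which lies outside $f(G)=\{(1,i_h)\}$ whenever $[[a,b],g]\neq 1$. This happens for $G=S_3$ with $a=g=(12)$ and $b=(13)$. If ${}^{a\star b}g$ equalled some $g'\in G$ in $LF(G)$, applying $\bar f$ would give ${}^{\bar f(a\star b)}f(g)=f(g')\in f(G)$, a contradiction.
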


\begin{proof} 
Evidently, $LF{G}$($SLF(G$) is generated by the set $\bigcup_{\beta\in \Omega}\beta (G)$, where $\Omega$ is the set of all bracket arrangements.  The proof is  by the induction. For $n\ =\ 2$, $T_{2}(G)$ is generated by $\{g\star h\mid g, h\in G\}$. We again use induction to show that $T_{2}(G)$ is normal in $SLF(G)$.  From (5) it follows that $x(g\star h )x^{-1}\ =\ ^{x}g\star ^{x}h\in T_{2}(G)$ for each $x\in G\ =\ \beta_{1}(G)$. Assume that $uvu^{-1}\in T_{2}(G)$ for all $u\in \beta_{r}(G)$  and $v\in T_{2}(G)$ for all $r < n$, $n\geq 2$. It suffices to show that $wvw^{-1}\in T_{2}(G)$ for all $w\in \beta_{n}(G)$ and $v\ =\ g\star h,\ g, h\in G$. Now, $w\ =\ x\star y$ for some $x\in \beta_{r}(G)$ and $y\in \beta_{s}(G)$, where $r, s < n $ with $ r+s\ =\ n$. From Proposition \ref{s2p1}(iii), $wvw^{-1}\ =\ ^{x\star y}(g\star h)\ =\ ^{[x, y]}(g\star h)$ belongs to $T_{2}(G)$  for $[x, y]$ is product of elements of $\beta_{t}(G)$ with $t\leq max \{r, s\}$. Assume that $T_{r}(G)$ is normal for all $2\leq r\ <\ n,\ n\geq 3$. Using the similar argument as before, we can show that $T_{n}(G)$ is also normal. The proof is complete by the induction. The rest follows by same arguments. 
\end{proof}

\begin{prp}\label{s3p3}
$T_{n}(G)\supseteq \beta_{n}(G)$ for all bracket arrangement $\beta_{n}$ of weight $n$.
\end{prp}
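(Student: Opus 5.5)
We prove the statement by strong induction on the weight $n$. We establish the somewhat stronger assertion that, for every bracket arrangement $\beta_{n}$ of weight $n$, the set $\beta_{n}(G)$ lies in $T_{n}(G)$, and at the same time that $x\star y\in T_{r+s}(G)$ whenever $x\in T_{r}(G)$ and $y\in T_{s}(G)$.

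For $n=1,2$ there is nothing to prove, because the only arrangements are $t_{1}$ and $t_{2}$. For $n\geq 3$ we write $\beta_{n}=\beta_{2}(\beta_{r},\beta_{s})$ with $r+s=n$. By the induction hypothesis, every $w\in\beta_{n}(G)$ has the form $x\star y$ with $x\in T_{r}(G)$ and $y\in T_{s}(G)$. It therefore suffices to prove the product statement
\[T_{r}(G)\star T_{s}(G)\subseteq T_{r+s}(G).\]

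\textbf{Reduction to generators.} Using Definition \ref{s2d1}(2),(3) and Proposition \ref{s2p1}(v), the element $x\star y$ expands as a product of conjugates of elements $a\star b$ and of their inverses, where $a$ is a simple generator of $T_{r}(G)$ and $b$ a simple generator of $T_{s}(G)$. By Proposition \ref{s3p2}, $T_{r+s}(G)$ is normal in $LF(G)$, so these conjugates cause no trouble. We are reduced to showing
\[a\star b\in T_{r+s}(G)\quad\text{for } a=(\cdots(x_{1}\star x_{2})\cdots)\star x_{r},\; b=(\cdots(y_{1}\star y_{2})\cdots)\star y_{s}.\]

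\textbf{Induction on $s$.} If $s=1$, then $a\star b$ is itself a simple generator of $T_{r+1}(G)$. If $s\geq 2$, write $b=c\star y$ with $c\in t_{s-1}(G)$ and $y\in G$. Apply the Jacobi identity, Definition \ref{s2d1}(4), in the rearranged form obtained by substituting suitable conjugates, so that its last argument becomes $a$ itself. By Definition \ref{s2d1}(5) and Proposition \ref{s2p1}(ii), this gives
\[a\star(c\star y)=\bigl((a\star c)\star {}^{c}y\bigr)^{\pm1}\,\bigl((y'\star a)\star c'\bigr)^{\pm1},\]
up to conjugation. Here $y'$ and $c'$ are conjugates of $y$ and $c$, and $c'\in T_{s-1}(G)$ by normality.
\begin{itemize}
\item In the first term, $a\star c\in T_{r+s-1}(G)$ by the inner induction on $s$. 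Then $(a\star c)\star{}^{c}y$ lies in $T_{r+s}(G)$ by the case $s=1$, applied after expanding $a\star c$ into generators as above.
\item In the second term, $y'\star a=(a\star y')^{-1}$ lies in $T_{r+1}(G)$. Then $(y'\star a)\star c'\in T_{r+s}(G)$ by the inner induction, since the second factor has weight $s-1<s$.
\end{itemize}
Normality (Proposition \ref{s3p2}) then closes up the conjugations.

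\textbf{Main obstacle.} The hard step is making the Jacobi identity produce exactly this shape. The conjugating superscripts ${}^{y}z$ in Definition \ref{s2d1}(4) must be absorbed. The plan is to apply (4) with $x=a$, $y=c$ and $z$ chosen as a conjugate of $y$ so that ${}^{c}z$ matches. The remaining conjugations are then removed using (5), together with the fact from Proposition \ref{s2p1}(iii),(iv) that conjugation by elements of $T_{k}(G)$ acts like conjugation by commutators and preserves each $T_{m}(G)$. One must also check that the weights add correctly, so that the double induction (outer on $n$, inner on $s$) is well-founded. This is the classical argument that $[\gamma_{r},\gamma_{s}]\subseteq\gamma_{r+s}$, transported to $LF(G)$.
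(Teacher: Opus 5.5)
Your proposal is correct and is essentially the paper's proof. Both argue by induction on the weight, reduce via (2), (3), (5) to pairs of simple generators $v\in t_r(G)$ and $w=u\star g\in t_s(G)$, and then apply the Jacobi identity (4) to $u$, $g$ and $v'={}^{g^{-1}}v$, giving $((u\star g)\star v)^{-1}=((g\star v')\star{}^{v'}u)\,((v'\star u)\star{}^{u}g)$. In that identity the second factor is handled by the weight-$(n-1)$ case followed by the case $s=1$, and the first factor by the pair $(r+1,s-1)$ through the inner induction on $s$. Substituting the conjugate $v'$ is exactly how the paper resolves the ``main obstacle'' you flag, so nothing beyond your sketch is needed.
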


\begin{proof}
We prove it by the induction on $n$. Trivially, $\beta_{1}(G)\ =\ \{(g)\mid g\in G\}\ =\ T_{1}(G)$ and also $\beta_{2}(G)\ \subseteq <\beta_{2} (G)>\ =\ T_{2}(G)$. Again, since $(g_{1}\star g_{2})\star g_{3}\ =\ (g_{3}\star (g_{1}\star g_{2}))^{-1}$, it also follows that $\beta_{3}(G)\subseteq T_{3}(G)$, where $\beta_{3}$ is any one of the bracket arrangement of weight 3. Assume the induction hypothesis. More explicitly, assume that $\beta_{r}(G)\subseteq T_{r}(G)$ for all $r < n$, $n\geq 4$. We prove it for $n$. Let $x$ be an element of $\beta_{n}(G)$.  Then  $x\ =\ \beta_{n} (g_{1}, g_{2}, \cdots , g_{n}), g_{i}\in G$, where  $\beta_{n}\ =\ (\gamma , \delta )$, $\gamma$  a bracket arrangement of weight $r$ and $\delta$ that of weight $s$ such that $r+s\ =\ n$ with $r, s\ < n$. Thus, $x\ =\ y\star z$, where $y\ =\  \gamma (g_{1},g_{2},\cdots , g_{r})\in \beta_{r}(G)$, and $z\ =\  \delta(g_{r+1}, g_{r+2}, \cdots , g_{n})\in \beta_{s}(G)$. By the induction hypothesis, $y\in T_{r}(G)$ and $z\in T_{s}(G)$. Suppose that $y\ =\ y_{1}^{\epsilon_{1}}y_{2}^{\epsilon_{2}}\cdots y_{p}^{\epsilon_{p}}$ and   $z\ =\ z_{1}^{\epsilon_{1}}z_{2}^{\epsilon_{2}}\cdots z_{q}^{\epsilon_{q}}$, where $y_{i}$ $(i\leq p)$ are in the generating set of $T_{r}(G)$ and $z_{i}$ ($j\leq q$) are in the generating set of $T_{s}(G)$, $\epsilon_{i}\ =\ \pm 1\ =\ \epsilon_{j}$. Using the defining conditions (2), (3), and (5) of the multiplicative Lie algebra, and proceeding inductively it can be easily observed that $x$ can be expressed as group product of elements of the types $v\star w$ or $w\star v$, where $v$ is a generating element of $ T_{r}(G)$ and $w$ is that of $ T_{s}(G)$ (note that $(v\star w)^{-1}\ =\ w\star v$). It suffices to show that $v\star w\in T_{n}(G)$. If $s\ =\ 1$, $v\star w$ is evidently a generating element of $T_{n}(G)$. Suppose that $s\geq 2$.  Then $w\ =\ u\star g$, where $u$ is a generating element of $ T_{s-1}(G)$ and $g\in G$. Consequently,
\begin{center}
$v\star w\ =\ v\star (u\star g)\ =\ ((u\star g)\star v)^{-1}$.
\end{center}
If $r\ =\ 1$, we are done. Suppose that $r\geq 2$. Then $T_{r}(G)$ is normal subgroup of $\hat{G}$. Further by defining condition (5),    $^{g^{-1}}v\ =\ v'$ remains a generating element of $T_{r}(G)$ and $^{g}v'\ =\ v$.  By the defining condition (4),
\begin{center}
$((u\star g)\star ^{g}v')((g\star v')\star ^{v'}u)((v'\star u)\star ^{u}g)\ =\ 1$.
\end{center}
Thus,
\begin{center}
$((u\star g)\star v)^{-1}\ =\ ((g\star v')\star ^{v'}u)((v'\star u)\star ^{u}g)$.
\end{center}

Now, $((v'\star u)\star ^{u}g)\ =\ ^{u}(^{u^{-1}}(v'\star u)\star g)$. By the induction hypothesis $(v'\star u)$ can be expressed as product generating elements of $T_{n-1}(G)$ and since $T_{n-1}(G)$ is normal, it follows that  $((v'\star u)\star ^{u}g)$ belongs to $T_{n}(G)$. Next, consider the element $((g\star v')\star ^{v'}u)$. As above 
\begin{center}
$((g\star v')\star ^{v'}u)\ =\ ^{v'}(^{v'^{-1}}(g\star v')\star u)$.
\end{center}

Clearly, $(g\star v')\ =\ (v'\star g)^{-1}$ is a generating element of $T_{r+1}(G)$ and since $T_{i}(G)$ is normal in $\hat{G}$, $^{v'^{-1}}(g\star v')$ is product of generating elements of $T_{r+1}(G)$. Since $u$ is a generating element of $T_{s-1}(G)$, $(r+1 + s-1)\ =\ n$, using second induction on $s$ in the pairs $(r, s)$ with $r+s\ =\ n$, we see that $(^{v'^{-1}}(g\star v')\star u)$ is a member of $T_{n}(G)$. This completes the proof of the proposition. 
\end{proof}

Note that the proof does not imply that $\hat{B_{n}}(G)\subseteq \hat{T_{n}}(G)$. Our next aim is to understand, to some extent, the group  structure of $SLF(G)$ and of $LF(G)$.

Let $H$ be a normal subgroup of a group $G$. It can be shown (using (5)) that $LF(H)$ is a normal subgroup of $LF(G)$ which is a subalgebra, however, it need not be an ideal of $LF(G)$. Evidently, the functor $LF$ from $\cat{GP}$ to $\cat{ML}$ preserves epimorphism.  In particular, given a normal subgroup $H$ of $G$, $LF(\nu)$ is  a surjective Lie algebra homomorphism from $LF(G)$ to $LF(G/H)$, where $\nu$ is the quotient map from $G$ to $G/H$. We try to describe its kernel. Let $T_{n}(H, G)$ denote the subgroup of $LF(G)$ generated by the set $t_{n}(H, G)\ =\ \{t_{n}(h_{1}, g_{2}, g_{3}, \cdots , g_{n})\mid h_{1}\in H, g_{i}\in G\}$. We can imitate the proof of Proposition \ref{s3p2} to establish that $T_{n} (H, G)$ is normal subgroup of $LF(G)$ for $n\geq 2$. Further, it can also be shown on similar lines that $\beta_{n}(H, G)\ =\ \{\beta_{n} (x_{1}, x_{2}, \cdots , x_{n})\mid x_{i}\in H \text{ for some } i\}\subseteq  T_{n}(H, G)$. Let $LF(H, G)$ denote the subgroup $\prod_{n=1}^{\infty}T_{n}(H, G)$ generated by the set of elements of the types $\beta_{n} (x_{1}, x_{2}, \cdots , x_{n})$, $x_{i}\in H$  for some $i$. Evidently, $LF(G, G)\ =\ LF(G)$.

\begin{prp}\label{s3p4}
The set $LF(H, G)$ defined above is an ideal of $LF(G)$ containing $LF(H)$ which is the kernel of $LF(\nu )$. More explicitly, we have a natural short exact sequence
\begin{center}
$1\longrightarrow LF(H, G)\stackrel{i}{\rightarrow}LF(G)\stackrel{\nu}{\rightarrow}LF(G/H)\longrightarrow 1$
\end{center}
of multiplicative Lie algebra.
\end{prp}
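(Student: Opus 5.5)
The proof has three parts. First show that $LF(H,G)$ is an ideal of $LF(G)$ containing $LF(H)$. Then show that it lies in $\ker LF(\nu)$. Finally build an inverse map from $LF(G/H)$ to $LF(G)/LF(H,G)$.

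\emph{Ideal.} By the remarks preceding the statement, each $T_n(H,G)$ with $n\geq 2$ is normal in $LF(G)$, by imitating the proof of Proposition \ref{s3p2}. For $n=1$, $T_1(H,G)$ is the image of $H$ in $LF(G)$. Since $LF(G)$ is generated by the $\beta(G)$, normality of $T_1(H,G)$ reduces to conjugating by elements $x\star y$. By Proposition \ref{s2p1}(iii) and condition (5) we have $^{x\star y}h=[x\star y,h]\,h$, and Proposition \ref{s2p1}(iv) gives $[x\star y,h]=[x,y]\star h\in T_2(H,G)$ up to sign. So the product $LF(H,G)=\prod_n T_n(H,G)$ is a normal subgroup.

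For the Lie product, it suffices by conditions (2) and (3) to check $w\star \beta_n(x_1,\dots,x_n)$ with $w\in\beta_m(G)$ and some $x_i\in H$. This element lies in $\beta_{m+n}(H,G)$, which is contained in $T_{m+n}(H,G)$ by the analogue of Proposition \ref{s3p3} mentioned before the statement. Products and inverses are then handled by conditions (2), (3), (5) and Proposition \ref{s2p1}(v). Finally, $LF(H)\subseteq LF(H,G)$ because $LF(H)$ is generated by the $\beta_n(h_1,\dots,h_n)$ with all $h_i\in H$.

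\emph{Containment in the kernel.} The map $LF(\nu)$ is a Lie homomorphism sending $g$ to $gH$. Hence it sends $\beta_n(x_1,\dots,x_n)$ to $\beta_n(x_1H,\dots,x_nH)$. If some $x_i\in H$, this has an entry equal to $1$, so it is trivial by Proposition \ref{s2p1}(i) applied inductively through the bracket. Thus $LF(H,G)\subseteq\ker LF(\nu)$, and $LF(\nu)$ induces a surjective Lie homomorphism $\overline{\nu}\colon LF(G)/LF(H,G)\to LF(G/H)$. Surjectivity holds because $LF$ preserves epimorphisms.

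\emph{Inverse map.} Consider the group homomorphism $G\to LF(G)/LF(H,G)$, $g\mapsto g\,LF(H,G)$. Its kernel contains $H$, because $H=T_1(H,G)\subseteq LF(H,G)$. It therefore factors through a group homomorphism $G/H\to LF(G)/LF(H,G)$. The quotient $LF(G)/LF(H,G)$ is a multiplicative Lie algebra because $LF(H,G)$ is an ideal. So the universal property of $LF(G/H)$ gives a Lie homomorphism $\psi\colon LF(G/H)\to LF(G)/LF(H,G)$. Now $\psi\circ\overline{\nu}$ fixes the image of $G$, which generates $LF(G)/LF(H,G)$ as a multiplicative Lie algebra. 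Hence $\psi\circ\overline{\nu}$ is the identity, $\overline{\nu}$ is injective, and $\ker LF(\nu)=LF(H,G)$.

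The main obstacle is the ideal property, specifically showing that $LF(H,G)$ is closed under $\star$ with arbitrary elements and normal. This requires the careful extension of Propositions \ref{s3p2} and \ref{s3p3} to the relative setting. I would do it by induction on weight, exactly paralleling those proofs and tracking the position of the $H$-entry through the Jacobi-type identity (4).
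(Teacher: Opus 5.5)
Your proposal is correct and follows the paper's route: you show $LF(H,G)$ maps to $1$ using $1\star x=1=x\star 1$, then apply the universal property of $LF(G/H)$ to $G/H\to LF(G)/LF(H,G)$ to get an inverse. You go beyond the paper's proof in two ways: you sketch the ideal property, which the paper takes from its preceding remarks, and you check $\psi\circ\overline{\nu}=\mathrm{id}$, which is the composite that actually gives injectivity of $\overline{\nu}$ (the paper only records $\nu\circ LF(\overline{j})=I$). One small imprecision: for higher-weight $x,y$, the element $[x,y]\star h$ lies in $\prod_{n\ge 2}T_n(H,G)$ rather than in $T_2(H,G)$, though your normality conclusion is unaffected.
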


\begin{proof}
 Using the fact that $1\star x\ =\ 1\ =\ x\star 1$ for all $x$ (Proposition \ref{s3p2}(i)), we see that $\nu o i\ =\ 0$. Next the image of inclusion group homomorphism $j\ =\ H\stackrel{i}{\rightarrow}LF(G)$ is clearly contained in $LF(H, G)$.  It induces the group homomorphism $\overline{j}$ from  $G/ H$ to $LF(G)/LF(H, G)$. Clearly $\nu \circ LF(\overline{j} )\ =\ I_{LF(G/H)}$. The result follows. 
\end{proof}

Restricting the above short exact sequence to $LF(H, G)\star LF(H, G)\ =\ \prod_{n=2}^{\infty}T_{n}(H, G)\subseteq LF(G)\star LF(G)$,  the following corollary becomes immediate.

\begin{cor}\label{s3p4c1}
Given a pair $(G, H)$, where $H$ is a normal subgroup of $G$, we have a natural short exact sequence of miltiplicative Lie algebras:
\begin{center}
$1\longrightarrow \prod_{n=2}^{\infty}T_{n}(H, G)\stackrel{i}{\rightarrow}LF(H, G)\star LF(H, G)\stackrel{\nu}{\rightarrow}LF(G/H)\star LF(G/H)\longrightarrow 1$. 
\end{center}
\end{cor}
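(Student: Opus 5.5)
The plan is to restrict the short exact sequence of Proposition \ref{s3p4} to the Lie commutator ideals. Write $\nu$ also for $LF(\nu)\colon LF(G)\to LF(G/H)$. I read the middle term as the Lie commutator of $LF(G)$, i.e.\ $LF(G)\star LF(G)=\prod_{n\geq 2}T_{n}(G)$, which contains $LF(H,G)\star LF(H,G)$. The statement then reduces to three things:
\begin{enumerate}
\item[(a)] $\nu$ maps $LF(G)\star LF(G)$ onto $LF(G/H)\star LF(G/H)$;
\item[(b)] the kernel of the restriction is $LF(H,G)\cap (LF(G)\star LF(G))$;
\item[(c)] this intersection equals $\prod_{n=2}^{\infty}T_{n}(H,G)$.
\end{enumerate}

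For (a), $\nu$ is a surjective multiplicative Lie algebra homomorphism, so $\nu(x\star y)=\nu(x)\star\nu(y)$. Every generator $\bar x\star\bar y$ of $LF(G/H)\star LF(G/H)$ lifts to some $x\star y$, so the image is the whole Lie commutator. Part (b) is immediate from Proposition \ref{s3p4}, since the kernel of $\nu$ on all of $LF(G)$ is $LF(H,G)$.

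For the inclusion $\prod_{n\ge2}T_{n}(H,G)\subseteq LF(H,G)\cap(LF(G)\star LF(G))$ in (c):
\begin{itemize}
\item each generator $t_{n}(h_{1},g_{2},\dots,g_{n})$ with $n\geq 2$ has the form $u\star g_{n}$, so it lies in $LF(G)\star LF(G)$;
\item by definition it also lies in $LF(H,G)$;
\item since each $T_{n}(H,G)$ is normal for $n\geq2$ (as in the proof of Proposition \ref{s3p2}), the product is a genuine normal subgroup, so the inclusion holds.
\end{itemize}

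The main obstacle is the reverse inclusion. Here I decompose $LF(H,G)=T_{1}(H,G)\cdot\prod_{n\geq2}T_{n}(H,G)$, where $T_{1}(H,G)=H\subseteq G\subseteq LF(G)$. This decomposition is valid because the factors with $n\geq2$ are normal. An element of the intersection can then be written as $h\,w$ with $h\in H$ and $w\in\prod_{n\ge2}T_{n}(H,G)\subseteq LF(G)\star LF(G)$. Hence $h\in H\cap (LF(G)\star LF(G))$, and it remains to show $h=1$.

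To do this, give $G$ the trivial multiplicative Lie algebra structure. By the universal property of $LF(G)$ as the free multiplicative Lie algebra on the group $G$, the identity of $G$ extends to a Lie homomorphism $\tau\colon LF(G)\to G$. Since $\tau(x\star y)=1$, $\tau$ kills $LF(G)\star LF(G)$, so $\tau(h)=1$. But $\tau$ restricts to the identity on $G$, so $\tau(h)=h$, and therefore $h=1$. This gives the reverse inclusion.

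Finally, exactness follows by combining (a), (b) and (c). The terms are ideals because products and Lie commutators of ideals are ideals, so this is a short exact sequence of multiplicative Lie algebras.
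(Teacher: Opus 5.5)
Your proof is correct, and at the top level it follows the paper's route. The paper's entire proof is one sentence: the corollary follows by restricting the sequence of Proposition \ref{s3p4} to Lie commutators.

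Your reading of the middle term as $LF(G)\star LF(G)$ is the one the paper's proof actually uses, since it writes $\subseteq LF(G)\star LF(G)$. It is also forced. The map $\nu$ vanishes on the ideal $LF(H,G)$, hence on $LF(H,G)\star LF(H,G)$, so the literal middle term cannot surject onto $LF(G/H)\star LF(G/H)$ unless the latter is trivial.

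Where you genuinely add something is in identifying the kernel $LF(H,G)\cap(LF(G)\star LF(G))$ with $\prod_{n\geq 2}T_n(H,G)$. The paper never addresses this intersection. Instead it asserts $LF(H,G)\star LF(H,G)=\prod_{n\geq 2}T_n(H,G)$, which is false in general. Under the canonical map $\eta\colon LF(G)\to G$ (improper structure), $LF(H,G)$ lands in $H$, so $LF(H,G)\star LF(H,G)$ lands in $[H,H]$. Meanwhile $T_2(H,G)$ maps onto $[H,G]$, and for $G=S_3$, $H=A_3$ these images are $1$ and $A_3$.

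Your retraction $\tau$ onto $G$ with the trivial structure gives $G\cap(LF(G)\star LF(G))=1$. Combined with $LF(H,G)=H\cdot\prod_{n\geq2}T_n(H,G)$, this is exactly the missing step, and it makes the argument complete.

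One small fix in your last line: the first term is an ideal because it equals the intersection of the ideals $LF(H,G)$ and $LF(G)\star LF(G)$. It is not an ideal because it is a product of ideals; the individual $T_n(H,G)$ need not be ideals.
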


\begin{rmk}
The above results may be effectively used to relate co-homology of groups with that of multiplicative Lie algebras.
\end{rmk}

\begin{cor}\label{s3p4c2}
 $K_{n}(G)\ =\ \prod_{i=1}^{n}T_{n}(G)$ is a subgroup of $\hat{G}$ (not necessarily normal)  and we have an ascending chain
\begin{center}
$G\approx K_{1}(G)\subseteq K_{2}(G)\subseteq\cdots K_{n}(G)\subseteq K_{n+1}(G)\cdots$
\end{center}
of subgroups of $\hat{G}$. 
\end{cor}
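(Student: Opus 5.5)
The plan is to read the product in the statement as $K_{n}(G)=\prod_{i=1}^{n}T_{i}(G)=T_{1}(G)T_{2}(G)\cdots T_{n}(G)$, a product of subgroups of $\hat{G}=LF(G)$ (the factor index in the statement should be $i$, not $n$). The whole argument should then reduce to Proposition \ref{s3p2} together with the elementary fact that the product of a subgroup with a normal subgroup is a subgroup.

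First, by Proposition \ref{s3p2}, each $T_{i}(G)$ with $i\geq 2$ is normal in $LF(G)$. A product of normal subgroups is again a normal subgroup, so by induction on $n$ the set $N_{n}(G)=T_{2}(G)T_{3}(G)\cdots T_{n}(G)$ is a normal subgroup of $LF(G)$; in fact it is just the subgroup generated by $\bigcup_{i=2}^{n}T_{i}(G)$. Since $T_{1}(G)=G$ is a subgroup and $N_{n}(G)$ is normal, $K_{n}(G)=T_{1}(G)N_{n}(G)$ is a subgroup of $LF(G)$. No ordering issue arises, because for the same reason $T_{1}(G)N_{n}(G)=N_{n}(G)T_{1}(G)$.

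Normality of $K_{n}(G)$ is not claimed, and need not hold, since $T_{1}(G)$ itself need not be normal (Proposition \ref{s3p2}). To make the parenthetical remark concrete, I would point out that $K_{1}(G)=G$ is already non-normal in general.

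For the chain, I would observe that $K_{n+1}(G)=K_{n}(G)T_{n+1}(G)\supseteq K_{n}(G)$, because $1\in T_{n+1}(G)$. For the first term, $K_{1}(G)=T_{1}(G)$ is the image of the canonical group homomorphism $i\colon G\to LF(G)$. This map is injective because the homomorphism $\eta\colon LF(G)\to G$ (into $G$ with its improper structure) satisfies $\eta\circ i=I_{G}$, so $G\approx K_{1}(G)$.

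The only point needing any care is this identification of $T_{1}(G)$ with $G$, i.e.\ injectivity of $G\to LF(G)$, and the retraction $\eta$ supplies it. Everything else is the standard subgroup-product argument.
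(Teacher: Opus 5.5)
Your proposal is correct, and it follows the route the paper intends: the corollary is stated without proof right after Proposition \ref{s3p2}, and the intended argument is that $T_{2}(G),\dots,T_{n}(G)$ are normal in $\hat{G}=LF(G)$, so their product with the subgroup $T_{1}(G)=G$ is a subgroup and the chain follows at once. Reading the statement as $K_{n}(G)=\prod_{i=1}^{n}T_{i}(G)$ is the right fix for the typo (the literal $T_{n}(G)$ would make the chain fail in general), and your use of the retraction $\eta$ to get $G\approx K_{1}(G)$ supplies a point the paper only asserts.
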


\begin{cor}\label{s3p4c3}
$\hat{G}\ =\ \bigcup_{i=1}^{\infty}K_{i}(G)$. 
\end{cor}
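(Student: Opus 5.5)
The plan is to show that $\bigcup_{i\geq 1}K_{i}(G)$ is a subgroup of $\hat{G}=LF(G)$ containing a generating set of $\hat{G}$, so that it must be all of $\hat{G}$.

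First, I will check that the union is a subgroup. By Corollary \ref{s3p4c2}, the $K_{i}(G)=\prod_{j=1}^{i}T_{j}(G)$ form an ascending chain of subgroups. The union of an ascending chain of subgroups is again a subgroup: given $a\in K_{i}(G)$ and $b\in K_{j}(G)$, both lie in $K_{\max\{i,j\}}(G)$, and hence so do $ab^{-1}$. This needs no normality, which matters because $T_{1}(G)=G$ need not be normal (Proposition \ref{s3p2}).

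Second, I will identify generators. By the construction $LF(G)=F(\beta(G))/\langle\hat{R}\rangle$, the group $\hat{G}$ is generated by the images of all bracket arrangements of elements of $G$. That is, $\hat{G}$ is generated by $\bigcup_{n\geq 1}\bigcup_{\beta_{n}}\beta_{n}(G)$, where $\beta_{n}$ runs over all bracket arrangements of weight $n$; this fact was already used at the start of the proof of Proposition \ref{s3p2}.

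Third, I will place each generator in the union. By Proposition \ref{s3p3}, every $\beta_{n}(G)$ is contained in $T_{n}(G)\subseteq K_{n}(G)$. So every generator lies in $\bigcup_{i}K_{i}(G)$. Since this union is a subgroup, it contains the subgroup they generate, which is $\hat{G}$. The reverse inclusion is trivial.

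The main obstacle is already handled by Proposition \ref{s3p3}: arbitrary (non-simple) bracket arrangements reduce to simple ones inside $T_{n}(G)$, and that reduction uses the Jacobi-type identity (4). The only remaining point to be careful about is the generating-set claim for $LF(G)$, namely that the relation $xy(xy)^{-1}$ does not introduce elements outside the image of $F(\beta(G))$. This holds because $LF(G)$ is by definition a quotient of $F(\beta(G))$.
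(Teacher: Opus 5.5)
Your proposal is correct and is essentially the argument the paper has in mind. The paper states this corollary without proof, as an immediate consequence of three facts: the $K_i(G)$ form an ascending chain of subgroups (Corollary \ref{s3p4c2}, which rests on the normality of $T_n(G)$ for $n\geq 2$ in Proposition \ref{s3p2}); $\beta_n(G)\subseteq T_n(G)$ (Proposition \ref{s3p3}); and $LF(G)$ is generated by all the $\beta_n(G)$, recorded in the paper as $LF(G,G)=LF(G)$. Reading $\hat{G}$ as $LF(G)$ matches the paper's usage, and you are right that the Jacobi identity enters only through Proposition \ref{s3p3}.
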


\section{Steinberg and Chevalley Multiplicative Lie algebras}

The most illuminating and motivating example for this section is the Steinberg multiplicative Lie algebra. Let $R$ denote a ring with identity. Let $GL(n,R)$ denote the general linear group over the ring $R$, that is the group of $(n\times n)$ invertible matrices over $R$. The group $GL(n,R)$ is embedded in $GL(n+1,R)$ through the map

\[A\to \begin{pmatrix}A & 0 \\ 0 & 1 \end{pmatrix}.\]

Let $GL(R)$ denote the direct limit of the chain 

\[GL(1,R)\subseteq \cdots \subseteq GL(n,R) \subseteq GL(n+1,R) \subseteq \cdots\]

Let $E_{ij}^{\lambda}$ ($i\neq j$, $\lambda\in R$) denote the elementary matrix with $1$ on the main diagonal and $\lambda$ at $ij^{th}$ entry. Let $E(R)$ be the subgroup of $GL(R)$ generated by the elementary matrices $E_{ij}^{\lambda}$ ($i\neq j$, $\lambda\in R$). The Steinberg group $St(n,R)$ is the group generated by the symbols 

\[\{x_{ij}^{\lambda}\mid 1\leq i\neq  j\leq n\ \text{and}\ \lambda\in R\}\]

subject to the  Steinberg relations

\begin{enumerate}
	\item[(i)] $x_{ij}^{\lambda + \mu}\ =\ x_{ij}^{\lambda}x_{ij}^{\mu}$,
	\item[(ii)] $[x_{ij}^{\lambda}, x_{jl}^{\mu}]\ =\ x_{il}^{\lambda\mu},\ i\neq l$, and
	\item[(iii)] $[x_{ij}^{\lambda}, x_{kl}^{\mu}]\ =\ 1$ for $j\neq k$ and $i\neq l$.
\end{enumerate}

Let $St(R)$ denote the direct limit $\lim_{n\to \infty} St(n,R)$. It is known that the Steinberg group $St(R)$ is the universal central extension of $E(R)$ (see \cite{jm} for more and further detail). We have the following central extension 

\[1\rightarrow K_2(R)\rightarrow St(R)\rightarrow E(R)\rightarrow 1,\]

where $K_2(R)$ is the kernel of the natural map. It is also known that $K_2(R)$ is isomorphisc to the Schur multiplies of $E(R)$.



For each $n\geq 3$, define the multiplicative Lie algebra $St^{m}(n, R)$ generated by the set $\{ x_{ij}^{\lambda}\mid 1\leq i\neq  j\leq n\ \text{and}\ \lambda\in R\}$ subject to relations

\begin{enumerate}
	\item[(i)] $x_{ij}^{\lambda + \mu}\ =\ x_{ij}^{\lambda}x_{ij}^{\mu}$,
	\item[(ii)] $x_{ij}^{\lambda}\star x_{jl}^{\mu}\ =\ x_{il}^{\lambda\mu},\ i\neq l$, and
	\item[(iii)] $x_{ij}^{\lambda}\star x_{kl}^{\mu}\ =\ 1$ for $j\neq k$ and $i\neq l$.
\end{enumerate}

Evidently $St^{m}(n, R)$ is a subalgebra of $St^{m+1}(n, R)$. The direct limit $\lim_{n\rightarrow \infty}St^{m}(n, R)$ is called the Steinberg multiplicative Lie algebra and it is denoted by $St^{m}(R)$. We have obvious surjective  multiplicative Lie algebra homomorphism $\chi_{n}$ from $St^{m}(n, R )$ to $St(n, R )$ induced by the tautological map $x_{ij}^{\lambda}\mapsto x_{ij}^{\lambda}$ and whose kernel is the ideal generated by the set $\{[y, x](x\star y)\mid x, y\in St^{m}(n, R )\}$. It can be shown (\cite{bdi}) that $\chi_{n}$ has a group theoretic section. Motivated from further developments in \cite{lu}, we introduce the notion of Chevalley multiplicative Lie algebra.

Let us first recall Chevalley group associated to an irreducible root system $\Phi$ in an Euclidean space. For undefined terms and further details, we refer the reader \cite{rs} and \cite{jh}.  We shall always assume that rank $\Phi\geq 2$. Let $K$ be a field with at least 5 elements. Let $G(\Phi, K)$ denote the group generated by the set $S\ =\ \{x_{\alpha}(a)\mid \alpha\in \Phi , a\in K\}$ subject to the relations:

(i) $x_{\alpha}(a)x_{\alpha}(b)\ =\ x_{\alpha}(a + b),\ \alpha\in \Phi , a, b\in K$,

(ii) For each  $\alpha, \beta\neq -\alpha$, 

\begin{center}
$[x_{\alpha}(a), x_{\beta} (b)]\ =\ \prod_{i, j\geq 1}x_{i\alpha + j\beta }(c_{ij}a^{i}b^{j})$,
\end{center}

where $c_{ij}$ is canonically chosen set of integers depending only on a fixed ordering of the set $\{i\alpha + j\beta\mid i, j\geq 1\}$ (lexicographic order say) with $c_{11}\ =\ \nu_{\alpha \beta}\ =\ \pm (r+1)$, where $\beta -r\alpha$ is the smallest root in the $\alpha$-string through $\beta$.

(iii) $h_{\alpha}(ab)\ =\ h_{\alpha}(a)h_{\alpha }(b)$, where $h_{\alpha}(a)\ =\ x_{\alpha}(a)x_{-\alpha}(-a^{-1})x_{\alpha}(a)x_{\alpha}(-1)x_{-\alpha}(1)x_{\alpha}(-1)$.
 
It can be easily observed that the commutator relation 

\begin{center}
$[h_{\alpha}(a), x_{\beta }(b)]\ =\ x_{\beta}((a^{\prec \beta, \alpha \succ} -1)b)$
\end{center}
is derivable from (i) and (ii).

The group $G(\Phi, K)$ is termed as the universal Chevalley group associated to the irreducible root system $\Phi$ and the field $K$ . Note that the group depends (upto isomorphism) only on the isomorphism class of the root system $\Phi$ and is independent of the choice of the said orderings. The group $G(\Phi, K)/Z$ ($Z$ is the center) is termed as Chevalley group of adjoint type. 

The Steinberg group $St(\Phi , K)$ associated to $\Phi$ and $K$ is the universal central extension of the universal Chevalley group generated by the set $S$ of symbols subject to the relations (i) and (ii) only. More explicitly, we have a unique surjective homomorphism $\nu$ from $St (\Phi , K)$ to $G(\Phi , K)$ given by the tautological map $x_{\alpha}(a)\mapsto x_{\alpha}(a)$ whose kernel is a central subgroup of $St(\Phi , K)$ which is precisely the Schur multiplier of $G(\Phi , K)$. Indeed 

\begin{center}
$1\longrightarrow Ker\nu\stackrel{i}{\rightarrow}St(\Phi , K)\stackrel{\nu}{\rightarrow}G(\Phi , K)\rightarrow 1$
\end{center}

is universal central extension of $G(\Phi , K)$.

\begin{df}\label{c4d1}
The Chevalley multiplicative Lie algebra $G^{m}(\Phi, K)$ associated to the irreducible root system $\Phi$ and the field $K$ is defined to be the multiplicative Lie algebra generated by the set $\hat{S}\ =\ \{\hat{x}_{\alpha}(a)\mid \alpha\in \Phi , a\in K\}$ subject to the relations:

(i) $\hat{x}_{\alpha}(a)\hat{x}_{\alpha}(b)\ =\ \hat{x}_{\alpha}(a + b),\ \alpha\in \Phi , a, b\in K$,

(ii) For each  $\alpha, \beta\neq -\alpha$, 
\begin{center}
$\hat{x}_{\alpha}(a)\star \hat{x}_{\beta} (b)\ =\ \prod_{i, j\geq 1}\hat{x}_{i\alpha + j\beta }(c_{ij}a^{i}b^{j})$,
\end{center}
where $c_{ij}$ is canonically chosen set of integers depending only on a fixed ordering of the set $\{i\alpha + j\beta\mid i, j\geq 1\}$ (lexicographic order say) with $c_{11}\ =\ \nu_{\alpha \beta}\ =\ \pm (r+1)$, where $\beta -r\alpha$ is the smallest root in the $\alpha$-string through $\beta$.

(iii) $\hat{h}_{\alpha}(ab)\ =\ \hat{h}_{\alpha}(a)\hat{h}_{\alpha }(b)$, where $\hat{h}_{\alpha}(a)\ =\ \hat{x}_{\alpha}(a)\hat{x}_{-\alpha}(-a^{-1})\hat{x}_{\alpha}(a)\hat{x}_{\alpha}(-1)\hat{x}_{-\alpha}(1)\hat{x}_{\alpha}(-1)$.

(iv) $\hat{h}_{\alpha}(a)\star \hat{x}_{\beta} (b)\ =\ \hat{x}_{\beta}((a^{\prec \beta, \alpha \succ} -1)b)$ for all $\alpha, \beta\in \Phi$ and $a, b\in K$.
\end{df}

It follows from $(iv)$ above that $G^{m}(\Phi , K)$ is Lie perfect. Clearly, the map $\hat{x}_{\alpha}(a)\mapsto x_{\alpha}(a)$ respects the relations, and as such we have a surjective multiplicative Lie algebra homomorphism $\eta$ from $G^{m}(\Phi , K)$ to $G(\Phi , K)$ given by $\eta (\hat{x}_{\alpha}(a))\ =\ x_{\alpha}(a)$, where $G(\Phi , K)$ is with improper multiplicative Lie algebra structure. 

Similarly, we introduce the Steinberg multiplicative Lie algebra  $St^{m}(\Phi , K)$ to be the multiplicative Lie algebra generated by the set $\hat{S}$ of symbols subject to the relations (i), (ii), and (iv) above. As such $St^{m}(\Phi , K)$ is also Lie perfect. Here again, we have surjective multiplicative Lie algebra homomorphism $\chi$ from $St^{m}(\Phi , K )$ to $St(\Phi , K)$ with improper Lie structure.

\begin{prp}\label{c4p1}
\begin{enumerate}
	\item[(i)] Kernel of $\eta$ is the ideal $H$ of $G^{m}(\Phi, K)$ generated by the set $\{(x\star y)[y, x]\mid x, y\in G^{m}(\Phi , K)\}$ and which  is a central subgroup of $G^{m}(\Phi, K)$. We have a semi-central extension

\begin{center}
$1\longrightarrow H\stackrel{i}{\rightarrow}G^{m}(\Phi, K)\stackrel{\eta}{\rightarrow}G(\Phi , K)\longrightarrow 1$.
\end{center}
	\item[(ii)] Kernel of $\chi$ is the normal subgroup $H^{m}$ of $St^{m}(\Phi, K)$ generated by the set $\{(x\star y)[y, x]\mid x, y\in St^{m}(\Phi , K)\}$ and which is central subgroup of $St^{m}(\Phi, K)$. We have a group theoretic central extension
\begin{center}
$1\longrightarrow H^{m}\stackrel{i}{\rightarrow}St^{m}(\Phi, K)\stackrel{\hat{\chi}}{\rightarrow}St(\Phi , K)\longrightarrow 1$.
\end{center}.
	\item[(iii)] $\chi$ admits a group theoretic section.
	\item[(iv)] We have another universal central extension
\begin{center}
$1\longrightarrow Ker\nu^{m}\stackrel{i}{\rightarrow}St^{m}(\Phi , K)\stackrel{\hat{\nu^{m}}}{\rightarrow}G^{m}(\Phi , K)\rightarrow 1$
\end{center}
of multiplicative Lie algebras induced by the corresponding universal central extension of groups.
  \item[(v)] $Ker\nu^{m}$ is the Schur multiplier of the multiplicative Lie algebra.
\end{enumerate}
\end{prp}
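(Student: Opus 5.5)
The proof rests on three ingredients: the kernel description in Proposition \ref{s2p4}, the Lie perfectness of $G^{m}(\Phi,K)$ and $St^{m}(\Phi,K)$ (immediate from relation (iv)), and the identities of Proposition \ref{s2p1}.

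\textbf{Part (i): identifying and centralizing the kernel.}

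First, $\eta$ is a surjective multiplicative Lie algebra homomorphism onto an improper multiplicative Lie algebra. By Proposition \ref{s2p4} it therefore factors through $G^{m}(\Phi,K)/A(G^{m}(\Phi,K))$, so $H=A\subseteq \ker\eta$.

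For the reverse inclusion, observe that the quotient $G^{m}/H$ is an improper multiplicative Lie algebra, so $\star$ is the commutator there. Hence the images of the $\hat{x}_\alpha(a)$ satisfy the Chevalley relations (i)--(iii) as group relations. By the universal property of the presentation of $G(\Phi,K)$, we get a group homomorphism $G(\Phi,K)\to G^{m}/H$ sending $x_\alpha(a)\mapsto \hat{x}_\alpha(a)H$. This map is inverse to the map induced by $\eta$, which gives $\ker\eta=H$.

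To show $H$ is central:
\begin{itemize}
\item By Proposition \ref{s2p1}(iv), $[x\star y,z]=[x,y]\star z$.
\item Using Definition \ref{s2d1}(2),(3),(5), one checks that $(x\star y)[y,x]$ lies in the centralizer of every element of the form $u\star v$. Indeed, Proposition \ref{s2p1}(iii) gives ${}^{x\star y}(u\star v)={}^{[x,y]}(u\star v)$.
\item Since $G^{m}$ is Lie perfect, the elements $u\star v$ generate $G^{m}$, so each generator of $H$ is central.
\item A subgroup generated by central elements is central and hence normal. Closure under $\star$ also holds, because $g\star h$ is trivial for central $h$ in a Lie perfect algebra, by Proposition \ref{s2p2}-type arguments.
\end{itemize}
So $H$ is a central ideal and the extension in (i) is semi-central.

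\textbf{Part (ii).}

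This repeats the argument for (i) with $St^{m}(\Phi,K)$ in place of $G^{m}(\Phi,K)$ and with relations (i),(ii) only. Note that relation (iv) collapses in $St(\Phi,K)$ to the derived commutator identity.

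\textbf{Part (iii): a group-theoretic section of $\chi$.}

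The plan is to build a group homomorphism $s\colon St(\Phi,K)\to St^{m}(\Phi,K)$ with $x_\alpha(a)\mapsto \hat{x}_\alpha(a)$. For $s$ to be well defined, the group commutators $[\hat{x}_\alpha(a),\hat{x}_\beta(b)]$ in $St^{m}$ must satisfy the Steinberg relation (ii). We know that $\hat{x}_\alpha(a)\star\hat{x}_\beta(b)$ satisfies it, so the task is to compare $[\hat{x}_\alpha(a),\hat{x}_\beta(b)]$ with $\hat{x}_\alpha(a)\star\hat{x}_\beta(b)$. Two possible routes:
\begin{itemize}
\item Follow \cite{bdi}: realize $St^{m}$ as a subgroup-level semidirect-type construction in which the group generated by the $\hat{x}_\alpha(a)$ is a copy of $St(\Phi,K)$.
\item Alternatively, view the group generated by the symbols inside $St^{m}$ as a quotient of $St(\Phi,K)$ via the free multiplicative Lie algebra on the group $St(\Phi,K)$ (the functor $LF$). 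Then show that the $\star$-relations impose no additional group relations, because the ideal they generate meets the image of $St(\Phi,K)$ trivially. This uses Proposition \ref{s3p4}.
\end{itemize}
\textbf{This is the main obstacle.} I would first attempt it by explicitly constructing a multiplicative Lie algebra structure on $St(\Phi,K)\ltimes M$ for a suitable module $M$.

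\textbf{Parts (iv) and (v).}

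The group-level universal central extension $1\to\ker\nu\to St(\Phi,K)\to G(\Phi,K)\to 1$ lifts along $\chi$ and $\eta$. Because $\hat{x}_\alpha(a)\mapsto\hat{x}_\alpha(a)$ respects the defining relations, there is a surjection $\nu^{m}\colon St^{m}\to G^{m}$. Its kernel is the ideal generated by the relations $\hat{h}_\alpha(ab)=\hat{h}_\alpha(a)\hat{h}_\alpha(b)$, which map to central elements.

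Lie perfectness of $St^{m}$ together with the universality of $St$ then gives universality in $\cat{ML}$. Concretely, given a central extension of $G^{m}$, I would lift generators and use perfectness to obtain uniqueness of the lift.

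Finally, the Schur multiplier of a Lie perfect multiplicative Lie algebra is the kernel of its universal central extension, by the Schur--Hopf formula of \cite{lu}. This yields (v).
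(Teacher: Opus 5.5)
The genuine gap is in (iii). Your handling of (ii), (iv) and (v) is at the paper's own (brief) level, and (i) matches the paper except for one step, discussed below.

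\textbf{Part (iii).} You leave (iii) open, and the target you set yourself is the wrong one. You try to show that the tautological assignment $x_\alpha(a)\mapsto\hat{x}_\alpha(a)$ is a group homomorphism $St(\Phi,K)\to St^{m}(\Phi,K)$. That means showing $[\hat{x}_\alpha(a),\hat{x}_\beta(b)]$ agrees with $\hat{x}_\alpha(a)\star\hat{x}_\beta(b)$. These two differ by an element of $H^{m}$, and nothing in the presentation kills it.

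In fact this target is equivalent to $H^{m}=1$. Suppose such an $s$ existed.
\begin{itemize}
\item Since $St(\Phi,K)$ is perfect, $s(St(\Phi,K))\subseteq[St^{m},St^{m}]$. So every generator $\hat{x}_\alpha(a)$ lies in the ideal $[St^{m},St^{m}]$, and $St^{m}$ is a perfect group.
\item On the other hand, $s$ splits $\chi$ and $H^{m}=\ker\chi$ is central, so $St^{m}=s(St(\Phi,K))\times H^{m}$.
\item A perfect group has no nontrivial abelian direct factor, so $H^{m}=1$.
\end{itemize}
Your route would therefore amount to proving that $\chi$ is an isomorphism, which is far more than (iii) asserts.

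The missing idea, and the paper's whole proof, is this. By (ii), $\chi$ is a group-theoretic central extension of $St(\Phi,K)$. Being the universal central extension of the perfect group $G(\Phi,K)$, $St(\Phi,K)$ is centrally closed, so every central extension of it splits. Concretely, $[St^{m},St^{m}]\to St(\Phi,K)$ is a perfect central extension, hence an isomorphism, and its inverse is a section. That section sends $x_\alpha(a)$ to the unique preimage of $x_\alpha(a)$ in $[St^{m},St^{m}]$, not to $\hat{x}_\alpha(a)$. No semidirect-product model is needed, nor any use of $LF$ or Proposition~\ref{s3p4}.

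\textbf{Part (i), closure under $\star$.} You justify closure of $H$ under $\star$ by claiming that $g\star h=1$ whenever $h$ is central in a Lie perfect multiplicative Lie algebra. That is false in general, so Proposition~\ref{s2p2}(i) cannot be used this way. For example, $\mathfrak{sl}_2(\mathbb{Q})$, regarded as a multiplicative Lie algebra on its additive group, is Lie perfect and every element is group-central, yet its Lie product is nontrivial.

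What you actually need, and what the paper uses, is only that central elements stay central under $\star$. By Proposition~\ref{s2p1}(iv), if $h$ is central then $[h\star z,w]=[h,z]\star w=1\star w=1$ for all $w$. Hence $Z(G^{m}(\Phi,K))$ is an ideal, and the ideal $H$ generated by the central elements $(x\star y)[y,x]$ is central. The elements $h\star z$ need not be trivial, but (i) only asserts that $H$ is central as a subgroup.
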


\begin{proof}
 (i) Let $H$ be the ideal of $G^{m}(\Phi , K)$ generated by the set $\{(x\star y)[y, x]\mid x, y\in G^{m}(\Phi , K)\}$. It follows from Proposition \ref{s2p1}(iv) that  $H\subseteq Ker \eta $. Thus, $\eta $ induces a Lie homomorphism $\overline{\eta}$ from $G^{m}(\Phi , K)/ H$ to the improper multiplicative Lie algebra $G(\Phi , K)$ which is given by $\overline{\eta }(xH)\ =\ \eta (x)$. The map $x_{\alpha}(a)\mapsto \hat{x}_{\alpha}(a)H$ respects the defining relation of the group $G(\Phi ,K)$ and as such induces group theoretic inverse of $\overline{\eta}$. This shows that $H$ is the kernel of $\eta$. 

 Since $St^{m}(\Phi , K)$ is Lie perfect,

\begin{align*}
[(x\star y)[y, x], (u\star v)] & = (x\star y)[y, x] (u\star v)[y, x]^{-1}(x\star y)^{-1}(u\star v)^{-1} \\
                               & = (x\star y)(y\star x) (u\star v)(y\star x)^{-1}(x\star y)^{-1}(u\star v)^{-1} \text{ (by Proposition \ref{s2p1}(iii))} \\
															 & = 1
\end{align*}

for all $x, y, u, v\in G^{m}(\Phi , K)$. Further, by Proposition \ref{s2p1}(iv),

\begin{center}
$[[x,y](y\star x)\star z, u\star v]\ =\ [[x,y](y\star x), z]\star (u\star v)\ =\ 1\star (u\star v)\ =\ 1$
\end{center}

Thus, $\eta $ induces a Lie homomorphism $\overline{\eta}$ from $G^{m}(\Phi , K)/ H$ to the improper multiplicative Lie algebra $G(\Phi , K)$ which is given by $\overline{\eta }(xH)\ =\ \eta (x)$. The map $x_{\alpha}(a)\mapsto \hat{x}_{\alpha}(a)H$ respects the defining relation of the group $G(\Phi ,K)$ and as such induces group theoretic inverse of $\overline{\eta}$. This shows that $H$ is the kernel of $\eta$.

\vspace{0.2 cm}

(ii) The proof is on the same lines as that of (i).

\vspace{0.2 cm}

(iii) Since any group theoretic central extension of $St(\Phi , K)$ splits (\cite{bdi}), the result follows from (ii).  

\vspace{0.2 cm}

(iv) Follows from the functoriality.

\vspace{0.2 cm}

(v) Follows from (iv). 
\end{proof}

\begin{rmk}
Not every central extension of a Chevalley group may split, for example

\[1\rightarrow Z(GL(2,,R))\rightarrow GL(2,R)\rightarrow PGL(2,R)\rightarrow 1.\]

However, there are Chevalley groups with trivial Schur multipliers with the property that all of their central extension splis (see \cite[Corollary 2, p. 52]{rs}). For such Chevalley groups, $\eta$ has a group theoretic section.
\end{rmk}

\end{document}